\newtheorem{thm}{Theorem}[section]
\newtheorem{THM}{Theorem}{\Alph{THM}}
\newtheorem{prop}[thm]{Proposition}
\newtheorem{lemma}[thm]{Lemma}
\theoremstyle{definition}
\newtheorem{remark}[thm]{Remark}
\newtheorem{example}[thm]{Example}
\newcommand{\nc}{\newcommand}
\nc {\hh}{\check{h}}
\nc {\DD}{\mathcal{D}}
\nc {\RR}{\mathcal{R}}
\nc {\Pp}{\mathbb{P}}
\nc {\Ss}{\mathcal{S}}
\nc {\PP}{\mathbb{P}^{2}}
\nc {\Pd}{ \check{\mathbb{P}}^{2}}
\nc {\WW}{\mathcal{W}}
\nc {\Sym}{\mathrm{Sym}}
\nc {\OO}{\mathcal{O}}
\nc {\UU}{\mathcal{U}}
\nc {\EE}{\mathcal{E}}
\nc {\MM}{\mathcal{M}}
\nc {\KK}{\mathcal{K}}
\nc {\PW}{\mathcal{P}}
\nc {\NW}{\mathcal{N}_{\WW}}
\nc {\FF}{\mathcal{F}}
\nc {\GG}{\mathcal{G}}
\nc {\ZZ}{\mathcal{Z}}
\nc {\LL}{\mathcal{L}}
\nc {\HH}{\mathcal{H}}
\nc {\NN}{\mathcal{N}}
\nc {\VV}{\mathcal{V}}
\nc {\Ww}{\mathbb{W}}
\nc {\QQ}{\mathbb{Q}}
\nc {\II}{\mathcal{I}}
\begin{document}

\date{}
\author{M. Falla Luza}
\author{P. Sad}
\email{maycolfl@gmail.com, sad@impa.br}

\title{Positive Neighborhoods of Curves}

\maketitle

\begin{abstract}
In this work we study neighborhoods of curves in surfaces with positive self-intersection that can be embeeded as a germ of neighborhood of a curve on the projective plane.
\end{abstract}

\tableofcontents

\section{Introduction}

We study in this paper neighborhoods of compact, smooth, holomorphic curves of complex surfaces which have positive self intersection number. Our main purpuse is to give a condition that guarantees the existence of an embedding of a neighborhood of the curve into the projective plane. The first example of a result on this problem comes from \cite{FA}; in that paper the authors showed that if the curve has genus 0 and self intersection number equal to 1 then the existence of three different fibrations over it implies that some neighborhood is diffeomorphic to a neighborhood of the line in the projective plane. In this paper we consider curves of self-intersection $d^2$ with $d \geq 2$.

Since a fibration over a curve of genus 0 is defined by a local submersion over $\Pp^1$ (that is, defined in a neighborhood of the curve), we may wonder if in the case  of higher genus the existence of a  number of local submersions is enough to guarantee an embedding into the projective plane $\Pp^2$. It is in fact a necessary condition. 

In order to discuss this, let us suppose  that a curve $C$ contained in some surface $S$ can be embedded in $\mathbb P^2$ as a curve $C_0$ of degree $d \geq 2$ (we have of course to start with $C\cdot C= d^2$ in $S$). It is easy to find infinitely many submersions in a neighborhood of $C_0$. For example, we take two curves $\{A=0\}$ and $\{B=0\}$ of the same degree $l\in {\mathbb N}$ which cross each other in $l^2$ distinct points not in $C_0$. It can be seen that the map $A/B$, which is well defined outside $\{A=0\}\cap \{B=0\}$, has no multiple fibers so that it has only a finite number of critical points; if $C_0$ avoids all these points then $A/B$ is a submersion in some neighborhood of $C_0$ and the restriction of $A/B$ to $C_0$ is a ramified map from $C_0$ to $\Pp^1$ of degree $l.d$\,. We will be particularly interested in the case $l=1$, that is, $A=0$ and $B=0$ are lines whose common point  is not in $C_0$; the submersion $A/B$ will be called a {\it pencil submersion} and the restriction of $A/B$ to $C_0$ is a ramified map of degree $d$ (any local submersion that leaves such a trace in $C_0$ is in fact a pencil submersion). We see that to be  equivalent to a neighborhood of $C_0$, a neighborhood of $C$ has to carry also  many submersions to $\Pp^1$, the surprising feature in \cite{FA} is that only three submersions are needed. The converse is not true as we can see in the following example. 

\begin{example}\label{fake-example}
Consider the rational curve in $\mathbb P^2$ defined in affine coordinates by the equation $y^2=x^2(x+1)$; it is a smooth rational  curve except for the node at the point $(0,0)$. We blow up first at a point in the curve different from $(0,0)$, and then we blow up at $(0,0)$. The strict transform is a smooth rational curve $C$ of self intersection number equal to 4 with many local submersions (which come from submersions constructed in the plane as above), but its neighborhood can not be embedded in the plane: given a submersion constructed using $l=1$ as above (before blow up's), we notice that it induces a ramified map from $C$ to $\Pp^1$ of degree 3; but for a  conic $C_0$ in the plane (which has of course self intersection number equal to 4), the ramified map induced by any local submersion is of even degree. 
\end{example}

A more refined question would be: can we obtain an embedding once it is assumed the  existence of three local submersions in a neighborhood of $C$ whose restrictions to $C$ are meromorphic maps of degree (a multiple of) $d$? We give a partial negative answer in Section \ref{sec-examples}.
 
We introduce then an extra condition (also a necessary one). A curve $C$ that has an embedding $\phi:C \rightarrow C_0\subset \mathbb P^2$ carries naturally a special set   of meromorphic maps ${\bf G}_{\phi}$ =$\{G_{|_{C_0}}\circ {\phi},\,\, G \,\,\,pencil\, submersion\}$. A set $\{F_i\}$ of submersions defined in a neighborhood of $C$ whose restrictions to $C$ have no common critical points is {\bf projective at C} if ${F_i}_{|_C}\in {\bf G}_{\phi}$. The submersions are called {\bf independet} if the singularities of the correspondent pencils on $\Pp^2$ are not aligned. We may state then our main result:

\begin{THM}\label{main-thm}
The existence of a projective triple of independent submersions at $C$  implies the existence of an embedding of a neighborhood of $C$ into the projective plane.
\end{THM}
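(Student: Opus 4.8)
The plan is to use the three submersions $F_1,F_2,F_3$ to produce an explicit holomorphic map on a neighborhood $U$ of $C$ and then identify its image with a del Pezzo model of $\PP$. Since each restriction $F_i|_C$ is the trace of a pencil submersion, there are three centers $p_1,p_2,p_3\in\PP$ (the base points of the pencils) and, writing $\pi_i$ for the projection from $p_i$, one has $F_i|_C=\pi_i|_{C_0}\circ\phi$. Independence means $p_1,p_2,p_3$ are not collinear, so $\rho:=(\pi_1,\pi_2,\pi_3)$ embeds $\PP\setminus\{p_1,p_2,p_3\}$ into $(\Pp^1)^3$ with image a smooth $(1,1,1)$-divisor $\WW=\{H=0\}$ (the degree-$6$ del Pezzo surface $\mathrm{Bl}_{p_1,p_2,p_3}\PP$); since the $p_i$ avoid $C_0$, $\rho$ is a biholomorphism near $C_0$ and sends $C_0$ to a curve $\tilde C_0\subset\WW$. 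I would set $\Phi:=(F_1,F_2,F_3):U\to(\Pp^1)^3$; then $\Phi|_C=\rho\circ\phi$ lands in $\tilde C_0\subset\WW$. Composing with $\rho^{-1}$ will give the desired map $\Psi:=\rho^{-1}\circ\Phi:U\to\PP$ with $\Psi|_C=\phi$, but this is only legitimate once we know $\Phi(U)\subset\WW$. Thus everything reduces to showing that $g:=\Phi^*H=H(F_1,F_2,F_3)$ vanishes identically near $C$, where $g$ is a section of $\LL:=\Phi^*\OO(1,1,1)=F_1^*\OO(1)\otimes F_2^*\OO(1)\otimes F_3^*\OO(1)$ and $g|_C=0$ because the three pencil lines through a point of $C_0$ are concurrent.

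To prove $g\equiv0$ I would show that $g$ vanishes to infinite order along $C$ and then invoke Krull's theorem ($\bigcap_k \II_C^k=0$ in the local rings) to conclude $g\equiv0$ on a neighborhood. Here the positivity of $C\cdot C$ enters decisively. Suppose $g$ vanishes to order exactly $k$; its leading normal jet is a nonzero holomorphic section $g_k\in H^0(C,N^{\otimes(-k)}\otimes\LL|_C)$, where $N=\OO_C(C)$ is the normal bundle with $\deg N=C\cdot C=d^2$. Since each $F_i|_C$ has degree $d$, one has $\deg(\LL|_C)=3d$, so $\deg\big(N^{\otimes(-k)}\otimes\LL|_C\big)=3d-kd^2=d(3-kd)$. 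This is strictly negative as soon as $kd>3$, in which case $H^0=0$ and no such nonzero $g_k$ exists. Hence $g$ cannot vanish to any finite order $k$ with $kd>3$; in particular, for $d\ge4$ already $g_1$ lives in a bundle of negative degree and must vanish, so $g$ vanishes to infinite order directly. This is the heart of the argument: the hypothesis $d\ge2$ together with $C\cdot C=d^2>0$ forces the obstruction bundles to be negative and kills all but finitely many jets.

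The remaining cases are the finitely many jets with $kd\le3$, namely $k=1$ for $d=2$ and $d=3$ (for $d\ge4$ there is nothing left to do). The main obstacle is precisely to show that the first normal jet $g_1\in H^0(C,N^{-1}\otimes\LL|_C)$ vanishes in these two cases, where the degree $d(3-d)$ equals $2$ or $0$ and so does not rule out sections a priori. This is where I expect the conditions of having no common critical points and of independence to be used in an essential, non-formal way: interpreting $F_1,F_2,F_3$ as a $3$-web on $U$ whose restriction to $C$ is the (hexagonal) web cut by three concurrent pencils, $g_1$ should be governed by the first-order web invariant along $C$, and one must show that these geometric hypotheses force it to be zero.

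Once $g_1=0$ is established, the jet argument of the previous paragraph applies for all $k\ge2$ and yields infinite-order vanishing, hence $g\equiv0$ by Krull. Finally, with $\Phi(U)\subset\WW$ the map $\Psi=\rho^{-1}\circ\Phi:U\to\PP$ is a well-defined holomorphic map with $\Psi|_C=\phi$; the no-common-critical-points condition guarantees that at each point of $C$ two of the differentials $dF_i$ are independent, so $\Psi$ is an immersion along $C$, and since $\Psi|_C=\phi$ is injective and $C$ is compact, $\Psi$ restricts to a biholomorphism of a neighborhood of $C$ onto a neighborhood of $C_0$, which is the required embedding.
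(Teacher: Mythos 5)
Your construction is genuinely different from the paper's proof (the paper builds meromorphic transport maps $\Phi_{F,G}$ from pairs of fibrations, computes indices of pulled-back foliations, and finishes with Brunella's tangency formula plus a perturbation argument), and its first half is sound and attractive: identifying the three pencil traces with projections from three non-collinear points $p_1,p_2,p_3$, realizing $\mathrm{Bl}_{p_1,p_2,p_3}\mathbb{P}^2$ as the smooth $(1,1,1)$-divisor $\mathcal{W}\subset(\mathbb{P}^1)^3$, and the jet argument for $g=\Phi^*H\equiv 0$ are correct whenever every bundle $(N^{*})^{\otimes k}\otimes\mathcal{L}|_C$ has negative degree, i.e.\ for $d\geq 4$. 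But the theorem is asserted for all $d\geq 2$, and for $d=2,3$ your proof stops exactly at the decisive point, as you yourself note: the first jet $g_1\in H^0(C,N^{*}\otimes\mathcal{L}|_C)$ sits in a bundle of degree $d(3-d)\in\{2,0\}$, and no degree count can kill it. This is not a technicality that can be waved away: if the theorem is true then necessarily $N\cong\phi^{*}\mathcal{O}_{C_0}(d)\cong\mathcal{L}|_C$, so for $d=3$ the jet bundle is trivial and for $d=2$ it is $\mathcal{O}_{\mathbb{P}^1}(2)$; in both cases nonzero sections abound, and proving that the particular section $g_1$ vanishes requires exactly the geometric exploitation of projectivity and independence that constitutes Sections 4--5 of the paper (which, by contrast, treat all $d\geq2$ uniformly). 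As it stands, your proposal proves the statement only for $d\geq 4$.

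There is a second, unflagged, flaw in the final step. That the restrictions $f_i$ have no common critical points does \emph{not} imply that two of the differentials $dF_i$ are independent at each point of $C$: independence of $dF_i$ and $dF_j$ at $p$ means transversality of the foliations $\mathcal{F}_i$ and $\mathcal{F}_j$ at $p$, a condition in the surface which the restrictions to $C$ do not control; all three foliations can be mutually tangent at $p$ while transverse to $C$ there, in which case no $f_i$ has $p$ as a critical point. These are precisely the degenerate points (the paper's cases $\bf A2$ and $\bf B$ with positive index) that the paper's perturbation argument excludes, and its second Special Example shows that with only two fibrations such points genuinely occur. Fortunately this particular gap can be closed inside your framework: $\det d\Psi$ is a section of $K_S\otimes\Psi^{*}K_{\mathbb{P}^2}^{-1}$, whose restriction to $C$ has degree $K_S\cdot C+3d=(d^2-3d-d^2)+3d=0$ by adjunction; hence it is either nowhere zero along $C$ (the desired immersion) or identically zero along $C$. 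In the latter case, at every $p\in C$ the kernel of $d\Psi_p$ is a line $K_p$ transverse to $T_pC$ (transverse because $d\Psi_p|_{T_pC}=d\phi_p\neq 0$) and, since each $F_i=\pi_i\circ\Psi$ is a submersion, $\ker dF_{i,p}=K_p\neq T_pC$ for every $i$; thus every $\mathcal{F}_i$ would be transverse to $C$ at every point and every $f_i$ would be unramified, contradicting Riemann--Hurwitz, which forces $d(d-1)>0$ ramification points. With this repair the last step is fine, but the missing cases $d\in\{2,3\}$ remain, so the proposal does not prove the theorem as stated.
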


The submersions in the statement of the Theorem are supposed to produce different fibrations; we remark that if $F$ is a submersion over  $\Pp^1$ and $T$ is a Moebius transformation, then $F$ and $T\circ F$ induce the same fibration.

\begin{remark}
The fibers of a submersion define a regular foliation in a neighborhood of $C$, which is generically transverse to $C$ with tangency points at the critical points of the restriction to the curve; the submersion is a meromorphic first integral for the foliation. The converse does not hold, that is, this type of foliation may not have a first integral, see \cite{MEZ}.
\end{remark} 

We mention that the study of neighborhoods of curves has already been pursued when the self-intersection is not positive as we can see in \cite {GRA}, \cite {SAV} and \cite{UE}.

This paper is organized as follows:  Section \ref{sec-examples}  presents  some examples and it is followed by  Section \ref{sec-const-mer-maps} where we discuss how to built meromophic maps starting from two different pencil submersions. This allows (Section \ref{sec-new-foliation}) to show the existence of foliations defined in a neighborhood of the curve   which have this curve as an invariant set, and finally in Section \ref{sec-proof-thm} we prove our theorem.

\section{Examples}\label{sec-examples}
This Section has two parts. In the first part we give examples  of surfaces containing  smooth curves   of self-intersection number $d^2$ which are not embeddable in the plane, although they are fibered by  submersions whose restrictions to the curves are meromorphic functions of a degree multiple of $d$. Once this is done, we give examples which satisfy the extra condition of our Theorem but have only one or two fibrations and  do not  embed them in the plane.

\subsection{Separating branches and examples with 3  fibrations}
 
We will use the following construction. Let us consider a curve H with an ordinary singularity $P$ with $m$ branches $L_1,\dots,L_m$. For each branch $L_j$ we take a neighborhood $V_j$ which is biholomorphic to a bidisc $D_j$ by means of a biholomorphism $\phi_j:D_j\rightarrow V_j$ ;  we assume that $\delta L_j \cap V_i =\emptyset$ for all $i\ne j$. We fix a neighborhood $V$ of $H\setminus \cup_1^m L_j$. Finally we take the disjoint union of $V$ with all th $D_j$, and glue $D_j$   to $V$ using the restriction of the map $\phi_j$ to $\phi_j^{-1}(V\cap V_j)$. In this way the union  of the sets $V_j$, which contains  $P$, is replaced by $m$ copies of the bidisc, and there is a new curve $H^{\prime}$ replacing $H$ inside a new surface without the ordinary singularity. As for the self-intersection number $H^{\prime}\cdot H^{\prime}$, we have that $H^{\prime}\cdot H^{\prime}= H\cdot H -m(m-1)= (H\cdot H -m^2) +m$.
Also any holomorphic foliation $\mathcal F$ defined in $V\cup_1^m V_j$ induces naturally a holomorphic foliation in the new surface which is $\mathcal F$ in $V$ and $\phi_j^*(\mathcal F)$ in each $D_j$. We refer to this construction as {\it separating branches of $H$  at P}.
\begin{figure}[ht!]
  \centering
    \includegraphics[scale=0.6]{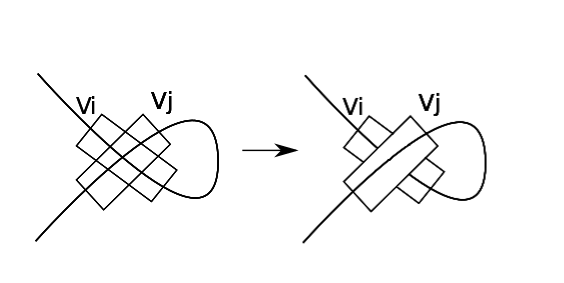}
  \caption{Separating branches}
  \label{fig:viaduto}
\end{figure}

Let us consider then a smooth plane curve $C^{\prime}$ of degree $d^{\prime}$ and genus $g(C^{\prime})= \dfrac{(d^{\prime}-1)(d^{\prime}-2)}{2}$; it can be also immersed in the plane as a curve $C$ of degree $d$ for any $d>2g(C^{\prime})$ with a number $s$ of nodal points such that $d^2-3d-2s={d^{\prime}}^2 -3d^{\prime}$. We choose $d-d^{\prime}=k^2$ for some $k\in \mathbb N$ such that 
\begin{enumerate}[(i)]
\item $d^{\prime}$ divides $k^3-k^2$,
\item $d^{\prime}$ does not  divide $2k^2$;
\end{enumerate}
we choose also  three pencils $d\left(\dfrac {u_j}{v_j}\right)=0$ of curves of degree $k$ whose sets of $k^2$ base points  lie in the regular part of $C$ and are two by two disjoint. After blowing up at these $3k^2$ points and separating branches at the nodal points of $C$ we get a curve $\tilde C$ containing in some surface with self-intersection number equal to $d^2-3k^2-2s = {d^{\prime}}^2$; the maps $\dfrac {u_j}{v_j}$ become submersions whose restrictions to $\tilde C$ are meromorphic maps of  degree $k.d- k^2$, which is a multiple of $d^{\prime}$ because of (i).

 A neighborhood of  $\tilde C$ is not equivalent to a neighborhood of $C^{\prime}$ in the plane. In fact, let us take a linear pencil $\mathcal L$ in the plane with base point outside $C$ and transverse to the branches at each nodal point (this is before blow-up`s and separation of branches). We have $2d=Tang(\mathcal L,C)+ \chi(C)$; since $tang(\mathcal L,C,P)=2$ for each nodal point $P$, we get $2d = 2s+\chi(C)+ tang(\mathcal L,C)$, where the last term counts the tangencies with the regular part of $C$. These tangencies persist when we blow up and separate branches; therefore, if a neighborhood of $\tilde C$ is equivalent to a neighborhood of $C^{\prime}$, we get in this neighborhood a foliation $\mathcal L^{\prime}$ with $Tang(\mathcal L^{\prime}, C^{\prime})=tang(\mathcal L,C)=2d-2s-\chi(C)$. It follows that $(deg(\mathcal L^{\prime})+2)d^{\prime}=2d-2s-\chi(C)+\chi(C^{\prime})=2d-2s+2g(C)-2g(C^{\prime})$ and since $g(C)-s=g(C^{\prime})$, we conclude that $(deg(\mathcal L^{\prime})+2)d^{\prime}=2d= 2d^{\prime}+2k^2$, a contradiction because of (ii).
 
We remark that when $d=4, d^{\prime}=3$ or $d=3,d^{\prime}=2$ the construction can be done with $k=1$ because $d>2g(C^{\prime})$ (and obviously (i) is satisfied in both cases). When $d=4, d^{\prime}=3$ we have also that (ii) holds true. In the general case $d^{\prime}>3$ we may choose $k=d^{\prime}+1$ for example in order to get both (i) and (ii) satisfied.
  
The special case $d=3,d^{\prime}=2$ (and $k=1$) can be treated with a small difference in what concerns the proof that the neighborhood of $\tilde C$ is not equivalent to a neighborhood of $C$: we   select $\mathcal L$ as the pencil whose base point is the node point $P$ of $C$. Since $Tang(\mathcal L,C)= 6=tang(\mathcal L,C,P)$, we see that there is no other point of tangency between $\mathcal L$ and $C$. We get then $(deg(\mathcal L^{\prime})+2).2= 4+2$ (after separating the branches at $P$ we obtain 2 radial singularities belonging to $\tilde C$ and a fortiori to $C^{\prime}$) and therefore $deg(\mathcal L^{\prime})=1$. But is impossible for a foliation of degree 1 in the plane to have 2 radial singularities.

It would be nice to have   examples where the degree induced by the submersions on the curve is  exactly $d^{\prime}$.

\subsection{Special Examples}

A construction  already presented in \cite{SA} of a pair (curve, surface) with a submersion whose restriction to the curve is a ramification map given a priori (we will say that the submersion is a {\bf lifting} of a ramification map). We start with a line bundle of Chern class $n\in \mathbb N$ over a curve $C$; the lines of the bundle define a foliation $\mathcal L$ in the total space of the bundle. Let $f: C \rightarrow \Pp^1$ be a ramified map with simple critical points and let $p$ be one of these points. There exists an involution $i$ defined in a neighborhood of $p$ in $C$ by $f(q)= f(i(q))$ for $q$ close to $p$.

We fix a neighborhood $U$ of $p$ and a holomorphic diffeomorphism $H:U \rightarrow {\mathbb D}\times {\mathbb D}$ such that: 1) $H(p)= (0,0)$; 2) $H(C \cap U)=\{(z_1,0)\in {\mathbb D}\times {\mathbb D}\}$; 3)$H$ takes $\mathcal L$ to the foliation $dz_1=0$ and 4) $h:= H|_{C \cap U}$ conjugates $i$ to the involution $z \longmapsto -z$, that is: $h(i(q))=-h(q)$. We take also a biholomorphism $\psi$ from ${\mathbb D}\times {\mathbb D}$ to a neighborhood of $(0,0)$ with the properties: 1) $\psi (0,0)=(0,0)$; 2) $\psi(z_1,0)= (z_1,0)$; 3) $\psi(\{1/2<|z_1|<1\}\times {\mathbb D})$ is saturated by leaves of the foliation $dZ_2-Z_1dZ_1=0$ and 4) $\psi$ is a holomorphic diffeomorphism when restricted to $\{1/2 < |z_1| <1\}\times {\mathbb D}$ that sends the foliation  $dz_1=0$ to the foliation $dZ_2-Z_1dZ_1=0$. Put $H_1= \psi \circ H$.  
 
We remove from the total space of the line bundle the fibers over the points of $h^{-1}(\{|z_1|\le 1/2\})$ and glue $\psi({\mathbb D}\times {\mathbb D})$ to the remaining set using $H_1$. In this way we get a new holomorphic surface which contains $C$ (the same curve we started with) and a holomorphic foliation transverse to $C$ except at $p$, where the tangency is simple. Furthermore, the "local holonomy" of the new foliation at $p$ is exactly $i$. We repeat the same procedure for all critical points of $f$. At the end, we have a holomorphic surface that contains $C$ and a holomorphic foliation which is transverse to $C$ except at the critical points of $f$; we may even assume that the self-intersection number of $C$ is $n\in \mathbb N$. The map $f$ can be extended along the leaves (because of its compatibility with the involutions involved), producing  the desired lifting. A similar construction can be made if the critical points are not simple.

Let us give two examples of pairs (curve, surface) which are not embeddable in the projective plane.

\begin{example}
We have already noticed that, in order to be embeddable in the projective plane, all the submersions defined in the neighborhood of the curve must have as restrictions maps whose  degrees are multiple of $d$ (here  $d^2$ is the self-intersection number of the curve in the surface). This does not happen in the example given in the Introdution. We give now another example of a different nature. Take $C\subset \mathbb P^2$. We start by claiming that there exists a ramification $f:C\rightarrow \Pp^1$ of degree $(d-1)d$ such that the set of poles is not contained in any curve of degree $d-1$. In order to see this, let us start with a ramification map $f_0:C\rightarrow \Pp^1$ defined as the restriction of $\dfrac {1}{Q_0}$ to $C$, where $Q_0$ is a polynomial of degree $d-1$ which intersects $C$ transversely at $l=(d-1)d$  different points $P_1,\dots,P_l$. Let us consider nearby points $P_1^{\prime},\dots,P_l^{\prime}$ and apply Riemann-Roch's theorem  to $D=P_1^{\prime} +\dots +P_l^{\prime}$: $l(D)\ge (d-1)d-g+1$; if we want to have $l(D)>1$, we ask for $(d-1)d-g+1>1$, or $(d-1)d >\dfrac{(d-1)(d-2)}{2}$, which is always true when $d>1$. In fact, from the proof of Riemann-Roch's theorem , since $(P_1^{\prime},\dots,P_l^{\prime})$ is close to $(P_1,\dots,P_l)$, we may choose a meromorphic function close to $f_0$, so its polar divisor is $D$. On the other hand, the points $(P_1^{\prime},\dots,P_l^{\prime})$ which belong to a curve of degree $d-1$ are contained in a subvariety of dimension $\dfrac{d(d+1)}{2}-1$, and all we have to do is check if $(d-1)d > \dfrac{d(d+1)}{2}-1$, which is obvious if $d \ge 3$ (we remark that there are not two different curves of degree $d-1$ passing through the $(d-1)d$ points $P_1^{\prime},\dots,P_l^{\prime}$). We select then
$P_1^{\prime},\dots,P_l^{\prime}$ outside this subvariety in order to get the ramification map $f$ and take a lifting $F$ defined in a surface $S$. We prove then the statement: there is no embedding $\Phi: S\rightarrow {\mathbb P^2}$. In fact, the submersion $F\circ{\Phi^{-1}}$ defined in a neighborhood of $C_0 \subset {\mathbb P^2}$ extends to  $\mathbb P^2$ as a meromorphic function (holomorphic in a neighborhood of $C_0$). We observe that, for $d\ge 3$, given two embeddings $\phi_i:C\rightarrow {\mathbb P^2}$, i=1,2 there exists an automorphism $T\in Aut(\mathbb P^2)$ such that $T(\phi_1(C))= \phi_2(C)$, see Appendix. Then the map $\Phi|_{C}:C \rightarrow C_0$ comes from a linear map on $\Pp^2$ and poles of $f$ are the intersection of $C$ with a curve of degree $d-1$, which is impossible.
\end{example}

\begin{example} 
We present now an example of a non embeddable pair (curve, surface) with a set of two fibrations which is projective at the curve. We start with a projective, smooth curve $C$ and select two pencil submersions; let $f_1$ and $f_2$ be the associated ramification maps of $C$. The tangencies between the pencils are obviously pieces of the common line. We will replace one of these pieces by a non-invariant curve of tangencies between two new foliations. The idea is the same used above to realize ramification maps;  the homeomorphism $\psi$ is going to be changed. The point $p$ this time is a point of tangency, and the coordinate chart $H$   sends the foliations associated to the submersions to two foliations $(dz_1=0, \mathcal H)$. We consider in $\mathbb C^2$ a couple of foliations  ($dZ_1=0, \mathcal H^{\prime}: d(Z_1-Z_2(Z_2-Z_1))=0)$, which have $Z_1=2Z_2$ as non-invariant line of tangencies. The homeomorphism $\psi$ is choosed in order to satisfy: 1) $\psi(0,0)=(0,0)$ and $\psi(z_1,0)=(z_1,0)$; 2) $\psi|_{\{1/2<|z_1|<1\}\times \mathbb D}$ is a holomorphic diffeomorphism over its image that sends $(dz_1=0,\mathcal H)$ to $(dZ_1=0, \mathcal H^{\prime})$. We put again $H_1=\psi \circ H$,which is the new glueing map. We can see that $C^2$ does not change and so the germ of surface is not isomorphic to $(C, \Pp^2)$. 

We could also use in the construction the pair of foliations ($dZ_1=0, \mathcal H^{\prime}: d(Z_1-Z_2(Z_2-Z_1^{k+1}))=0)$ for $k\in \mathbb N$, but the curve $C$ will have self-intersection number equal to $d^2-k$.
\end{example}

\section{Constructing meromorphic maps}\label{sec-const-mer-maps}

Let us once more describe the setting we are going to analyse. We have a curve $C$ contained in some surface $S$ with $C\cdot C= d^2$ and  $d\in \mathbb N$. There exist three submersions $F$, $G$ and $H$ defined in $S$ and taking values in $\Pp^1$ which define foliations $\mathcal F$, $\mathcal G$ and $\mathcal H$ generically transversal to $C$ whose leaves are the levels curves. In order to simplify the exposition, we assume that all tangencies with $C$ are simple and distinct (when we look to the tangencies for any pair of foliations). We denote $f=F|_{C}$, $g=G|_{C}$ and $h=H|_{C}$, all of them ramification maps from $C$ to $\Pp^1$ whose ramification points correspond to the tangency points of the foliations (because $F$, $G$ and $H$ are submersions). Furthermore, we assume that $C$ embedds into $\mathbb P^2$ by a map $\phi:C\rightarrow C_0$; $C_0$ is a smooth algebraic curve of degree $d$. In order to complete the picture, we select pencil submersions $F_0$, $G_0$ and $H_0$ (with associated foliations ${\mathcal F}_0$, ${\mathcal G}_0$ and ${\mathcal H}_0$), with singular points not aligned, which restric to $C_0$ as $d$ to $1$ maps $f_0$, $g_0$ and $h_0$ to $\Pp^1$ and ask $\{f,g,h\}$ to be conjugated by $\phi$ to $\{f_0,g_0,h_0\}$: $f_0 \circ \phi = f$, $g_0 \circ \phi = g$ and $ h_0 \circ \phi = h$. We remark that $\phi(tang(\mathcal F,C))= tang (\mathcal F_0,C_0)$ once more because these tangency points are exactly the ramification points of $f$ and $f_0$ (we have also that $\phi(tang(\mathcal G,C))= tang (\mathcal G_0,C_0)$ and $\phi(tang(\mathcal H,C))= tang (\mathcal H_0,C_0)$). For simplicity, we will assume that $F_{|_C}, G_{|_C}$ and $H_{|_C}$ have only simple critical points.

\begin{lemma} 
Any pair of foliations defined by projective submersions at $C$ are generically transverse to each other along $C$.
\end{lemma}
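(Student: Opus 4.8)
The plan is to argue by contradiction. Write $\mathrm{tang}(\FF,C)$ and $\mathrm{tang}(\GG,C)$ for the (finite) sets of points where the regular foliations $\FF$ and $\GG$ are tangent to $C$; by the standing simplifying assumption these two sets are disjoint, and they coincide respectively with the ramification loci of $f=F|_{C}$ and $g=G|_{C}$. Denote by $\mathrm{Tang}(\FF,\GG)$ the tangency locus of the two foliations, i.e.\ the analytic subset of $S$ where their leaves share a tangent direction (locally $\{dF\wedge dG=0\}$). Since $C$ is a smooth irreducible curve, the failure of generic transversality of $\FF$ and $\GG$ along $C$ is exactly the inclusion $C\subseteq \mathrm{Tang}(\FF,\GG)$. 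I therefore assume this inclusion and aim for a contradiction.

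The heart of the argument is a pointwise transfer of tangency. First I would produce a point $p\in\mathrm{tang}(\FF,C)$: because the triple is projective at $C$, the restriction $f$ is a degree-$d$ ramified covering of $\Pp^1$ with $d\ge 2$, so Riemann--Hurwitz forces the ramification locus to be nonempty (under the simplicity assumption it has cardinality $2g(C)-2+2d>0$). Fix such a $p$. At $p$ the leaf of $\FF$ is tangent to $C$, i.e.\ the leaf direction of $\FF$ coincides with $T_pC$. On the other hand $p\in C\subseteq\mathrm{Tang}(\FF,\GG)$, so the leaf directions of $\FF$ and $\GG$ coincide at $p$; since both foliations are regular (they are defined by submersions), these are genuine one-dimensional directions and transitivity gives that the leaf direction of $\GG$ at $p$ also equals $T_pC$. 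Hence $p\in\mathrm{tang}(\GG,C)$ as well.

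This yields $p\in \mathrm{tang}(\FF,C)\cap\mathrm{tang}(\GG,C)$, contradicting the assumption that the tangency sets of distinct pairs of foliations with $C$ are disjoint. Therefore $C\not\subseteq\mathrm{Tang}(\FF,\GG)$ and the two foliations are generically transverse along $C$; the same reasoning applies verbatim to the pairs $(\FF,\HH)$ and $(\GG,\HH)$, which settles the lemma.

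I expect the main conceptual obstacle to be the temptation to prove the statement by transporting the picture from $\PP$. In the planar model the pencils $\FF_0,\GG_0$ are tangent exactly along the line joining their base points, which meets $C_0$ in only $d$ points, so $\FF_0,\GG_0$ are generically transverse along $C_0$; one would like to push this back to $\FF,\GG$ through $\phi$. The difficulty is that $\phi$ only conjugates the restrictions $f,g$ to $f_0,g_0$, whereas generic transversality of two foliations in the ambient surface depends on the transverse $1$-jets of $F,G$ along $C$, which the conjugation does not control. The argument above avoids this issue entirely: it never compares $\FF,\GG$ to the model, but instead reduces ambient transversality along $C$ to the intrinsic, and by hypothesis disjoint, curve-tangency loci.
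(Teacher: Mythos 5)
Your argument is correct under the standing hypotheses of Section 3, but it follows a genuinely different route from the paper's. You argue pointwise: by Riemann--Hurwitz the degree-$d$ map $f=F|_{C}$ (with $d\geq 2$, which is where projectivity at $C$ enters) must ramify, so $tang(\mathcal{F},C)\neq\emptyset$; if $C$ were contained in the tangency locus of $\mathcal{F}$ and $\mathcal{G}$, any such ramification point would also be a tangency point of $\mathcal{G}$ with $C$, contradicting the standing assumption that tangencies with $C$ are distinct for any pair of foliations. The paper instead invokes Brunella's formula $tang(\mathcal{F},\mathcal{G})\cdot C= N_{\mathcal{F}}\cdot C + N_{\mathcal{G}}\cdot C + K_S\cdot C$, computes $N_{\mathcal{F}}\cdot C=N_{\mathcal{G}}\cdot C=\chi(C)+(d^{2}-d)=2d$ (Riemann--Hurwitz again) and $-K_S\cdot C=3d$ (adjunction), and concludes $tang(\mathcal{F},\mathcal{G})\cdot C=d$; since $C\cdot C=d^{2}>d$ for $d\geq 2$, the curve $C$ cannot be a component of the tangency divisor. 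The trade-off is this: your proof is more elementary (no intersection theory, no normal bundles of foliations), but it depends essentially on the disjointness of $tang(\mathcal{F},C)$ and $tang(\mathcal{G},C)$, which the paper imposes only ``in order to simplify the exposition''; the paper's proof never uses that assumption, so the lemma remains valid even when $f$ and $g$ share ramification points, and it moreover yields the sharp count $tang(\mathcal{F},\mathcal{G})\cdot C=d$, which matches the planar model (the tangency locus of two pencils is the line through their base points, meeting $C_0$ in $d$ points). Both arguments degenerate correctly at $d=1$ --- yours because a degree-one map has no ramification, the paper's because then $d=d^{2}$ and no contradiction arises --- consistent with the remark that the Lemma fails for $d=1$; but for $d\geq 2$ your version proves strictly less, since removing the expositional assumption breaks it while the paper's computation survives.
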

\begin{proof}
Let $\mathcal F$ and  $\mathcal G$ be two projective submersions at $C$. From \cite{BRU} we have
$$
tang(\mathcal F,\mathcal G)\cdot C= N_{\mathcal F} \cdot C + N_{\mathcal G}\cdot C + K_S \cdot C
$$
where $tang(\mathcal F,\mathcal G)$ is the curve of tangencies between the foliations, $N_{\mathcal F}$ (resp. $N_{\mathcal G}$) is the normal bundle associated to $\mathcal F$ (resp. $\mathcal G$) and $K_S$ is the canonical bundle of $S$. Since 
\begin{eqnarray*}
N_{\mathcal F}\cdot C &=& \chi(C)+ tang ({\mathcal F},C)= 3d-d^2 + d^2-d\\
N_{\mathcal G}\cdot C &=& \chi(C)+ tang ({\mathcal G},C) =3d-d^2 + d^2-d\\
-K_S \cdot C_0 &=& \chi(C) + C\cdot C = 3d-d^2 + d^2\\
\end{eqnarray*}
we conclude that 
$
tang(\mathcal F,\mathcal G)\cdot C = d
$
so that $\mathcal F$ and $\mathcal G$ are not tangent to each other along $C$.
\end{proof}
We observe that the Lemma is not true for $d=1$ (see \cite {FA}).
 
In this Section we will see how to associate to a pair of submersions, say $F, G$, a meromorphic map $\Phi_{F,G}$. It is defined initially as a biholormorphism from a neghborhood of the set $C\setminus (A \cup \phi^{-1}(A_0))$ to a neighborhood of $C_0\setminus (A_0 \cup \phi(A))$, where $A= tang(\mathcal F, C)\cup tang (\mathcal G,C) \cup (tang(\mathcal F, \mathcal G)\cap C)$ and $A_0= tang(\mathcal F_0, C_0)\cup tang (\mathcal G_0,C_0) \cup (tang(\mathcal F_0, \mathcal G_0)\cap C_0)$. Given a point $p\in C\setminus (A \cup {\phi}^{-1}(A_0))$, the foliations $\mathcal F$ and $\mathcal G$ are transverse to each other and to $C$ in a neighborhood of this point and the foliations $\mathcal F_0$ and $\mathcal G_0$ are  transverse to each other and to $C_0$ in a neighborhood of $\phi(p)$; therefore, for $q\in S$ close to $p$ we may associate the points $q_{\mathcal F}$ and $q_{\mathcal G}$ where the leaves of $\mathcal F$ and $\mathcal G$ intersect $C$. The leaves of $\mathcal F_0$ and $\mathcal G_0$ through $\phi(q_{\mathcal F})$ and $\phi(q_{\mathcal G})$ will intersect (by definition) at the point $\Phi_{F,G}(q)$. It can be seen that this maps extends biholomorphically  to the points of $tang(\mathcal F,C)$ and $tang(\mathcal G,C)$, essentially because the foliations $\mathcal F$ and $\mathcal G$   are transverse to each other at those points. From now on we change  $A$ and $A_0$ to $A= tang(\mathcal F, \mathcal G)\cap C$ and $A_0=tang(\mathcal F_0, \mathcal G_0)\cap C_0$ and analyse the behavior of $\Phi_{F,G}$  at   points of  $A \cup {\phi^{-1}}(A_0)$. We distinguish two cases
\begin{enumerate}[(A)]
\item  $\phi(p)\in tang(\mathcal F_0,\mathcal G_0,C_0)$. 
\item $\phi(p)\notin tang(\mathcal F_0,\mathcal G_0,C_0)$.
\end{enumerate}

\begin{prop}\label{extension}
$\Phi_{F,G}$ extends meromorphically to a neighborhood of $C$.
\end{prop}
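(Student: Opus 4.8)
The plan is to localize the statement at the finitely many points of $A\cup\phi^{-1}(A_0)$ and to exploit the fact that the leaves of the pencils $\mathcal{F}_0$ and $\mathcal{G}_0$ are genuine \emph{lines} in $\PP$. By the construction preceding the statement, $\Phi_{F,G}$ is already defined and holomorphic on a neighborhood of $C\setminus(A\cup\phi^{-1}(A_0))$ and extends biholomorphically across $tang(\mathcal{F},C)$ and $tang(\mathcal{G},C)$; hence at each point $p$ that remains to be analysed the foliations $\mathcal{F}$ and $\mathcal{G}$ are transverse to $C$ (the tangencies being simple and distinct), which is exactly what makes the leaf-projections $q\mapsto q_{\mathcal{F}}$ and $q\mapsto q_{\mathcal{G}}$ holomorphic near $p$. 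Write $P_1,P_2\in\PP$ for the singular points of $\mathcal{F}_0,\mathcal{G}_0$, let $\ell$ be the line they span, and recall that $A_0=\ell\cap C_0$ and that $P_1,P_2\notin C_0$.

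The key observation I would use is that $\Phi_{F,G}(q)$ is the intersection of two actual lines: the leaf of $\mathcal{F}_0$ through $\phi(q_{\mathcal{F}})$ is the line $L_1(q)=\overline{P_1\,\phi(q_{\mathcal{F}})}$, and likewise $L_2(q)=\overline{P_2\,\phi(q_{\mathcal{G}})}$. Two distinct lines of $\PP$ meet in exactly one point, so the apparent ``two nearby leaves may meet twice'' ambiguity never arises: $\Phi_{F,G}$ is single valued wherever $L_1(q)\ne L_2(q)$. Choosing holomorphic lifts $\hat P_1,\hat P_2\in\mathbb{C}^3$ of the centers and local holomorphic lifts $c_1(q),c_2(q)\in\mathbb{C}^3\setminus\{0\}$ of $\phi(q_{\mathcal{F}}),\phi(q_{\mathcal{G}})\in C_0$, the coefficient vectors of the two lines are $\hat P_1\times c_1(q)$ and $\hat P_2\times c_2(q)$, both holomorphic and nonvanishing (because $P_1,P_2\notin C_0$ forces $\phi(q_{\mathcal{F}})\ne P_1$, $\phi(q_{\mathcal{G}})\ne P_2$), and
\[
\Phi_{F,G}(q)=\big[(\hat P_1\times c_1(q))\times(\hat P_2\times c_2(q))\big]\in\PP .
\]
This writes $\Phi_{F,G}=[\Phi_0:\Phi_1:\Phi_2]$ with the $\Phi_i$ holomorphic in a full neighborhood of $p$; it is therefore automatically a meromorphic map, holomorphic off the analytic set where all three $\Phi_i$ vanish, i.e. off $\{q:L_1(q)=L_2(q)\}$. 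As it agrees with the transverse definition of $\Phi_{F,G}$ on the dense open set where $\mathcal{F}_0$ and $\mathcal{G}_0$ are mutually transverse, it is the sought extension.

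It then remains to check that the indeterminacy set $\{L_1(q)=L_2(q)\}$ has codimension two, and this is where the case distinction enters. Since $L_1\ni P_1$ and $L_2\ni P_2$, one has $L_1(q)=L_2(q)$ only if both lines equal $\ell$, i.e. only if $\phi(q_{\mathcal{F}})$ and $\phi(q_{\mathcal{G}})$ both lie on $\ell\cap C_0=A_0$; near $p$ this forces $q$ to lie simultaneously on the leaf $\mathcal{L}_{\mathcal{F}}$ of $\mathcal{F}$ through $p$ and on the leaf $\mathcal{L}_{\mathcal{G}}$ of $\mathcal{G}$ through $p$. In case (B), $\phi(p)\notin\ell$, so $L_1(p)\ne L_2(p)$ and $\Phi_{F,G}$ is already holomorphic at $p$. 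In case (A), $\phi(p)\in A_0$, and the indeterminacy set is $\mathcal{L}_{\mathcal{F}}\cap\mathcal{L}_{\mathcal{G}}$: if $\mathcal{F},\mathcal{G}$ are transverse at $p$ these leaves meet only at $p$, and even when $p\in A$ the tangency is simple, so the two distinct smooth leaves still meet only at $p$. In every case the indeterminacy reduces to the single point $p$, giving a genuine meromorphic extension. The main obstacle — and really the whole content of the argument — is the passage from the a priori multivalued ``intersection of nearby leaves'' to a single-valued map; I expect it to be resolved not by any local normal-form computation but by the global rigidity of lines in $\PP$, after which codimension two of the indeterminacy locus is just the transversality (or simple tangency) of the two leaves through $p$.
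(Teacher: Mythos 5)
Your strategy is genuinely different from the paper's, and its core is sound: instead of the paper's local normal-form computations (which, in adapted coordinates, write $\Phi_{F,G}$ explicitly as $\left(x,u/\xi\right)$ and split into the cases $\mathbf{A_1}$, $\mathbf{A_2}$, $\mathbf{B}$), you exploit the fact that the fibers of pencil submersions are honest lines, lift to $\mathbb{C}^3$, and represent $\Phi_{F,G}$ near each problematic point by a triple of holomorphic functions $[\Phi_0:\Phi_1:\Phi_2]$. That representation alone already yields the proposition: a map to $\mathbb{P}^2$ given by holomorphic homogeneous components, not all identically zero, is automatically meromorphic, because one may cancel the local greatest common divisor of the three components, after which their common zero set has codimension two.

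The genuine gap is in the step you then declare to be the remaining essential check, namely that $\{q: L_1(q)=L_2(q)\}$ has codimension two; this fails exactly in the case the paper labels $\mathbf{A_2}$. In Case (A) you argue only for leaves of $\mathcal F$ and $\mathcal G$ through $p$ that are transverse or simply tangent, and both arguments use that the two leaves are \emph{distinct} curves. But nothing in the hypotheses prevents $\mathcal F$ and $\mathcal G$ from sharing the fiber through $p$: the standing genericity assumption concerns tangencies of the foliations \emph{with $C$}, not with each other, and downstairs $\mathcal F_0$ and $\mathcal G_0$ do share the fiber $\ell$, so a common fiber upstairs is exactly what occurs when the neighborhood really is plane-like. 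When $\mathcal L_{\mathcal F}=\mathcal L_{\mathcal G}$, every $q$ on this common leaf satisfies $q_{\mathcal F}=q_{\mathcal G}=p$, hence $L_1(q)=L_2(q)=\ell$, and your indeterminacy set contains a whole curve, not just $p$. This does not make the conclusion false: it only means the $\Phi_i$ acquire a common factor vanishing on that leaf, which must be divided out, and after cancellation the map is in fact holomorphic there, as the paper's explicit formula in Case $\mathbf{A_2}$ shows, $\Phi_{F,G}=\left(x,\dfrac{yA_2+h(\xi)}{1+yA_2}\right)$. So either drop the codimension-two ``verification'' and invoke the cancellation argument explicitly, or add the common-fiber case. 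A final caveat: the paper's coordinate computation is not merely a proof device — the local models $\mathbf{A_1}$, $\mathbf{A_2}$, $\mathbf{B}$ and the functions $A_1$, $A_2$, $B$ are reused in Proposition \ref{Indices-Z} and in Section \ref{sec-proof-thm}, so your argument, adequate for this proposition, would not substitute for the paper's proof in the overall scheme.
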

\begin{proof} 
{\bf Case A:} We may assume, choosing conveniently the coordinates $(x,y)$ around $p$ and affine coordinates $(X,Y)$,  that

\begin {itemize}
\item $p=(0,0)$, $C$ is $y=0$ and $\mathcal F$ is defined by $dx=0$;  
\item $\phi(p)=(0,0)$, $\mathcal F_0$ is defined by $dX=0$, $\mathcal G_0$ is the radial pencil with $(0,1)$ as base point ($X=0$ is a common fiber of $\mathcal F_0$ and $\mathcal G_0$);
\item $C_0$ is defined by $Y=h(X)$ with $h(0)=0$, $h^{\prime}(0)=0$ and $\phi(x)= (x,h(x))$.
\end {itemize}

The leaf of $\mathcal F$ (respec. $\mathcal G$) through a point $(x,y)$ crosses the $x$-axis at $x$ (respectively $\xi(x,y)$ for a holomorphic function $\xi$ such that $\xi(x,0)=x$.   It follows that 
$$ 
\Phi_{F,G}(x,y)= \left(x,1-\dfrac{x(1-h(\xi(x,y))}{\xi (x,y))}\right)= \left(x,\dfrac{u(x,y)}{\xi(x,y)}\right)
$$

\noindent The expression defines a meromorphic map in a neighborhood of $(0,0)$. There are two possible cases: 

\begin{itemize}
\item $\bf A_1$: the germs $x$ and $\xi$ are relatively prime; the line of poles of $\Phi_{F,G}(x,y)$ is   $\xi (x,y)=0$ and has multiplicity 1. We write $\xi(x,y)-x=y\,A_1(x,y)$  for some holomorphic function $A_1(x,y)$; the $\mathcal G$-fiber may be transversal to the $\mathcal F$-fiber (when $A_1(0,0)\neq 0$) or tangent to it (in which case $A_1(0,0)\neq 0$).
\item $\bf A_2$: the germs $x$ and $\xi$ have a common factor; write $\xi(x,y)=x(1+y\,A_2(x,y))$, thus $\Phi_{F,G}(x,y)$ is a holomorphic map ($\mathcal F$ and $\mathcal G$ have $x=0$ as a common fiber), but it may be non-injective (unless $A_2(0,0)\neq 0$). 
\end {itemize} 

{\bf Case B:} We assume:
\begin {itemize}
\item $p=(0,0)$, $C$ is $y=0$ and $\mathcal F$ is defined by $dx=0$;  
\item $\phi(p)=(0,0)$, $\mathcal F_0$ is defined by $dX=0$ and  $\mathcal G_0$ is defined by $dY-dX=0$ (in affine coordinates);
\item $C_0$ is defined by $Y=h(X)$ with $h(0)=0$, $h^{\prime}(0)=0$ and $\phi(x)= (x,h(x))$.
\end{itemize}

We have then
$$
\Phi_{F,G}(x,y)= (x,\xi(x,y)-x +h(\xi(x,y))
$$
It follows that $\Phi_{F,G}$ is a holomorphic map in a neighborhood of $p$\,; writing   $\xi(x,y)-x=y\,B(x,y)$, we see that  $\Phi_{F,G}$ is a local biholomorphism when $B(0,0)\neq 0$, that is,
the fibers of $\mathcal F$ and $\mathcal G$ are transversal at $p$.
\end{proof}
An important consequence  for us is that the pull-back by $\Phi_{F,G}$ of a holomorphic foliation $\mathcal L$ on $\Pp^2$ is also a holomorphic foliation in $S$. In the next Section we describe the singularities of  ${\Phi^*_{F,G}}(\mathcal L)$.  

\section{New foliations on S}\label{sec-new-foliation}

Let us take a foliation $\mathcal L$ on $\mathbb P^2$ defined by $\omega= LdP-d.PdL=0$, where $P(X,Y)=\sum_{i+j \leq d} a_{ij}X^iY^j$ is a polynomial of degree $d$ such that $C_0=\{P=0\}$ (we may assume $a_{0d}\neq 0$) and $L$ is a linear polynomial such that $L=0$ is transverse to $C_0$ . The singularities of $\mathcal L$ contained in $C_0$ are supposed to be disjoint of $A_0\cup \phi(A)$.

We proceed to compute the multiplicity $Z(\mathcal L^{*},C,p)$  along $C$  of $p$ as a singularity of $\mathcal L^{*}=\Phi_{F,G}^{*}(\mathcal L)$ at the points where $\Phi_{F,G}$ maybe fails to be a biholomorphism. In order to make the computation easier, we take $L(X,Y)=X+b$. 
\begin{prop}\label{Indices-Z}
With notation of the proof of Proposition \ref{extension}, we have
\begin{itemize}
\item Case A1: $Z(\mathcal L^{*},C,p)= d+mult_0(A_1(x,0))$.
\item Case A2: $Z(\mathcal L^{*},C,p)= mult_0(A_2(x,0))$.
\item Case B: $Z(\mathcal L^{*},C,p)=mult_0(B(x,0))$.
\end{itemize}
\end{prop}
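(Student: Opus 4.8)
The plan is to exploit the fact that $\LL$ carries the meromorphic first integral $P/L^{d}$, so that $\LL^{*}=\Phi_{F,G}^{*}(\LL)$ is defined by the first integral $(P\circ\Phi_{F,G})/(L\circ\Phi_{F,G})^{d}$, equivalently by the $1$-form $\eta=(L\circ\Phi_{F,G})\,d(P\circ\Phi_{F,G})-d\,(P\circ\Phi_{F,G})\,d(L\circ\Phi_{F,G})$ after removing common factors. Since $C$ is invariant by $\LL^{*}$ (it is mapped into $C_{0}=\{P=0\}$) and $C=\{y=0\}$ in the coordinates of Proposition \ref{extension}, writing $\eta=E\,dx+F\,dy$ one has $E(x,0)\equiv 0$, and the multiplicity $Z(\LL^{*},C,p)$ is the order of vanishing at $x=0$ of $F(x,0)$ once any common factor of $E$ and $F$ has been cancelled. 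Thus the whole computation reduces to producing the local expression of $F(x,0)$.

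First I would record the local factorizations that make the bookkeeping transparent. Because $\mathcal F_{0}$ is given by $dX=0$, the first coordinate of $\Phi_{F,G}$ equals $x$, hence $L\circ\Phi_{F,G}=x+b$; since the base points of the pencil $\LL$ are the singularities of $\LL$ on $C_{0}$, and these are assumed disjoint from $A_{0}\cup\phi(A)\ni\phi(p)$, the point $\phi(p)$ is not a base point, so $b\neq 0$ and $L\circ\Phi_{F,G}$ is a unit. Next, as $C_{0}=\{Y=h(X)\}$ is smooth and transverse to $X=0$ at $\phi(p)$, I factor $P=(Y-h(X))\,Q(X,Y)$ locally with $Q$ holomorphic and $Q(0,0)\neq 0$, and I note that $1+xh'(x)-h(x)$ and $Q(x,h(x))$ are units, using $h(0)=h'(0)=0$.

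It then remains to run the three cases using the formulas from the proof of Proposition \ref{extension}. In Cases A2 and B the map $\Phi_{F,G}$ is holomorphic: substituting $Y=W(x,y)$ (the second coordinate of $\Phi_{F,G}$) into $P=(Y-h)Q$ and differentiating in $y$ at $y=0$ gives $F(x,0)=(x+b)\,W_{y}(x,0)\,Q(x,h(x))$, and a short computation yields $W_{y}(x,0)=A_{2}(x,0)\cdot(\text{unit})$ in Case A2 and $W_{y}(x,0)=B(x,0)\cdot(\text{unit})$ in Case B, whence $Z=mult_{0}(A_{2}(x,0))$ and $Z=mult_{0}(B(x,0))$ respectively. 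In Case A1 the map is genuinely meromorphic, with pole divisor $\{\xi=0\}$, so I would clear denominators: since $P$ has $Y$-degree $d$, the function $\widehat{P}:=\xi^{d}\,(P\circ\Phi_{F,G})$ is holomorphic and vanishes on $C$, and evaluating $\widehat P_{y}(x,0)$ produces a factor $x^{d-1}$ coming from the top $Y$-term of $P$ together with the factor $A_{1}(x,0)$ coming from $\xi_{y}(x,0)$; carrying this through $\eta$, now written with $\widehat P$ and $\xi(x+b)$ in place of $P\circ\Phi_{F,G}$ and $L\circ\Phi_{F,G}$, the residual factor $\xi(x,0)=x$ contributes one more power of $x$, giving $F(x,0)=x^{d}\,A_{1}(x,0)\cdot(\text{unit})$ and hence $Z=d+mult_{0}(A_{1}(x,0))$.

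The delicate point is exactly Case A1: one must track how the order-$d$ pole of $\Phi_{F,G}$ along $\{\xi=0\}$ combines with the $Y$-degree $d$ of $P$ to contribute precisely the factor $x^{d}$ on $C$, and then check that the cleared $1$-form $\eta$ has no extra common factor, beyond the expected $y$ dividing $E$, that would alter the order of $F(x,0)$. Verifying that the auxiliary quantities $1+xh'(x)-h(x)$, $Q(x,h(x))$ and $x+b$ are all units is what guarantees that none of the claimed contributions are accidentally cancelled or inflated.
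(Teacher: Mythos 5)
Your proposal is correct and follows essentially the same route as the paper: both pull back the defining form $\omega = L\,dP - d\,P\,dL$ by $\Phi_{F,G}$, use that the first coordinate of $\Phi_{F,G}$ is $x$ (so $L\circ\Phi_{F,G}=x+b$ is a unit), clear the pole $\xi^{d}$ in Case A1, and read $Z(\mathcal{L}^{*},C,p)$ off as the vanishing order along $y=0$ of the $dy$-coefficient of the cleared form. The only cosmetic difference is that you expand $P$ through the local factorization $P=(Y-h(X))Q$ with $Q$ a unit, whereas the paper uses the monomial expansion of $P$ with $a_{0d}\neq 0$ to write $P\circ\Phi_{F,G}=yv/\xi^{d}$; both yield $v(x,0)=x^{d-1}A_{1}(x,0)\cdot(\mathrm{unit})$ (resp.\ $A_{2}$, $B$ times a unit) and hence the same indices.
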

\begin{proof}
{\bf Case A1}: $x$ and $\xi$ are relatively prime. It follows that
$$
P(\Phi_{F,G}(x,y))=\dfrac{yv(x,y)}{\xi(x,y)^d}
$$
In fact, $P(\Phi_{F,G}(x,0))=0$ and
$
P(X,Y)=a_{0d}Y^d + \sum_{j\leq d-1}a_{ij}X^iY^j
$
and therefore
$$
P(\Phi_{F,G}(x,y))= a_{0d}\dfrac{u^d}{\xi^d}+ \dfrac{\sum_{d-j\geq 1}a_{ij}x^iu^j{\xi}^{d-j}}{\xi^d}
$$
In particular, $v(x,0)=x^{d-1}A_1(x,0)+\dots$. We have also $L(\Phi_{F,G}(x,y))=x+b$, $b\neq 0$, so that
$$
\Phi_{F,G}^{*}\,\omega = \dfrac{1}{\xi^{d+1}}[(x+b){\xi}(yd\,v + vd\,y) - d.yv((x+b)d\xi + {\xi}d\,x)]
$$
Therefore  $\mathcal L^{*}$ is defined by $ (x+b){\xi}(yd\,v + vd\,y) - d.yv((x+b)d\xi + {\xi}d\,x)=0$ near the point $p$ and  
$$
Z(\mathcal L^{*},C,p)= 1+mult_0(v(x,0))= d+mult_0(A_1(x,0))
$$
We observe that $Z(\mathcal L^{*},C,p)>0$ when the case ${\bf A1}$ is present.

{\bf Case A2}: $\xi$ divides $x$ ($\mathcal F$ and $\mathcal G$ share the  leaf passing through $p$). Let us write as before $\xi(x,y)= x(1+yA_2(x,y))$; it follows that 
$$
\Phi_{F,G}(x,y)=(x, \dfrac {yA_2(x,y) +h(\xi(x,y))}{1+yA_2(x,y)})
$$
Writing $P(\Phi_{F,G}(x,y))= yv(x,y)$, we see that $v(x,0)=A_2(x,0)+\dots$ and 
$$
\Phi_{F,G}^{*}\,\omega = (x+b)(vdy + ydv) - d.yvdx
$$ 
We conclude that  
$$
Z(\mathcal L^{*},C,p)=mult_0(v(x,0))=mult_0(A_2(x,0))
$$
Let us notice that $Z(\mathcal L^{*},C,p)=0$ implies that $A_2(0,0)\neq 0$, that is, $\Phi_{F,G}(x,y)$ is a local biholomorphism at $p$.

{\bf Case B}: $\phi(p)\notin tang(\mathcal F_0,\mathcal G_0) \cap C_0$. We have

$$
\Phi_{F,G}(x,y)=(x,\xi-x+h(\xi(x,y))
$$
Writing $P(\Phi_{F,G}(x,y))= yv(x,y)$, we see that $v(x,0)=B(x,0)+\dots$ and
$$
\Phi_{F,G}^{*}\,\omega = (x+b)(vdy + ydv) - d.yvdx
$$
We conclude that  
$$
Z(\mathcal L^{*},C,p)=mult_0(v(x,0))_0=mult_0(B(x,0))
$$
Again, $Z(\mathcal L^{*},C,p)=0$ implies that $B(0,0)\neq0$, that is, $\Phi_{F,G}(x,y)$ is a local biholomorphism at the point p.
\end{proof}  
We intend now to see the implications of having two maps $\Phi_{F,G}$ and $\Phi_{F,H}$ simultaneously; the fibrations $\mathcal F$, $\mathcal G$ and $\mathcal H$ are associated to pencil submersions $\mathcal F_0$, $\mathcal G_0$ and $\mathcal H_0$. Let us call $B= tang(\mathcal F,\mathcal H)\cap C$ and $B_0= tang(\mathcal F_0,\mathcal H_0)\cap C_0$. We consider two foliations $\mathcal I$ and $\mathcal L$ on $\mathbb P^2$ like before. Remark that $Z(\mathcal{I}, C_0)= Z(\mathcal{L}, C_0)=d$. We will assume: 1) all singularities of $\mathcal I$ and $\mathcal L$ lie outside the set $K= A_0 \cup \phi(A)\cup B_0 \cup \phi(B)$; 2) all curves of tangencies between $\mathcal I$ and $\mathcal L$ cross $C_0$ outside the set $K$. We denote $\mathcal I^*= \Phi_{F,G}^*(\mathcal I)$ and $\mathcal L^*= \Phi_{F,H}^*(\mathcal L)$. We will use again the formulae  from \cite{BRU} to compute numerical invariants associated to tangent lines between two foliations. We have:
$$
tang(\mathcal I,\mathcal L)\cdot C_0= N_{\mathcal I} \cdot C_0 + N_{\mathcal L}\cdot C_0 + K_{\mathbb P^2}\cdot C_0=2d^2 -d,
$$
since $\mathcal I$ and $\mathcal H$ have degree $d-1$ and $K_{\mathbb P^2}\cdot C_0 =-3d$.

Let us call $\mathcal Z_1(\mathcal I^*,C)$ ($\mathcal Z_1(\mathcal L^*,C))$  the set of  points   where  $\Phi_{F,G}$   is  not a local biholomorphism (respectively $\Phi_{F,H}$ is not a local biholomorphism). We define $Z_1(\mathcal I^*,C)$ as the sum of all indexes $Z(\mathcal I^*,C,p)$ at points of $\mathcal Z_1(\mathcal I^*,C)$ (we put $Z_1(\mathcal L^*,C)$ for the correspondent sum at points of $\mathcal Z_1(\mathcal L^*,C)$).  

As for the foliations $\mathcal I^*$ and $\mathcal L^*$, we have that 
\begin{eqnarray*}
tang(\mathcal I^*,\mathcal L^*)\cdot C&=& N_{\mathcal I^*} \cdot C + N_{\mathcal L^*}\cdot C + K_S \cdot C \\
&=& Z(\mathcal I^*,C) + Z(\mathcal L^*,C) + 2d^2-3d\\
&=& Z_1(\mathcal I^*,C) + Z_1(\mathcal L^*,C) + 2d^2-d
\end{eqnarray*}
We conclude therefore that
$$
tang(\mathcal I^*,\mathcal L^*)\cdot C = Z_1(\mathcal I^*,C) + Z_1(\mathcal L^*,C)+ tang(\mathcal I,\mathcal L)\cdot C_0
$$
Observe that curves $C$ and $C_0$ appear as components of the tangency locus in both sides of the last equation, thus we cancel $d$ from the equation and consider, from now, tangency loci besides $C$ and $C_0$. This formula suggests that  $tang(\mathcal I^*,\mathcal L^*)\cap C$ may be also computed looking at the points of  $\phi^{-1}(tang(\mathcal I, \mathcal L)\cap C) \cup  \mathcal Z_1(\mathcal I^*,C) \cup \mathcal Z_1(\mathcal L^*,C)$. 

Our aim is to prove that $\Phi_{F,G}$ and $\Phi_{F,H}$ are everywhere local biholomorphisms. First of all we have to associate the tangencies between $\mathcal I$ and $\mathcal L$  to   tangencies between $\mathcal I^*$ and $\mathcal L^*$. There is a little difficulty here because $\mathcal I^*$ and $\mathcal L^*$ are obtained from $\mathcal I$ and $\mathcal L$ using different pull-back's; the pre-image by $\phi$ of a point of tangence between $\mathcal I$ and $\mathcal L$ might not be a point of tangency between $\mathcal I^*$ and $\mathcal L^*$. We take the foliations $\mathcal I$ and $\mathcal L$ defined by the equations $LdP-d.PdL=0$ and $(L+ a)dP + d.PdL=0$; their curve of tangencies is defined by $dL\wedge dP=0$ (besides the curve $C_0$). When intersecting with $C_0$, these are the points of tangency of $\mathcal F_0$ with $C_0$.

\begin{prop}\label{intersection-at-tangencies}
Let $\phi(p)$ be a point of tangency between $\mathcal F_0$ and $C_0$. Then $(tang(\mathcal I^*,\mathcal L^*), C)_p=1$.
\end{prop}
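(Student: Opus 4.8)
The plan is to reduce the statement to a local computation at $p$ and to control the interaction between the two \emph{different} pull‑backs and the tangency of $\mathcal{F}_0$ with $C_0$. Since $\phi$ conjugates $f$ to $f_0$ and $\phi(tang(\mathcal{F},C)) = tang(\mathcal{F}_0,C_0)$, the hypothesis $\phi(p)\in tang(\mathcal{F}_0,C_0)$ is equivalent to $p\in tang(\mathcal{F},C)$, i.e. $f = F|_C$ has a simple critical point at $p$. First I would fix coordinates $(x,y)$ on $S$ with $C = \{y=0\}$, $p=(0,0)$ and $F = x^2+y$, so that the local involution on $C$ is $i(x)=-x$; and affine coordinates $(X,Y)$ on $\mathbb{P}^2$ with $\mathcal{F}_0 = \{dX=0\} = \{dL=0\}$, $L = X+b$, together with $P_Y(\phi(p))=0$, $P_X(\phi(p))\neq0$ (the vertical tangency of $C_0$) and $\phi(x) = (\xi_0(x),\eta_0(x))$.

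The structural remark that makes the computation feasible is that \emph{both} $\Phi_{F,G}$ and $\Phi_{F,H}$ conjugate the same pair $\mathcal{F}\mapsto\mathcal{F}_0$. Hence $X\circ\Phi_{F,G}$ and $X\circ\Phi_{F,H}$ are both constant on the leaves of $\mathcal{F}$ and both restrict to $\xi_0$ on $C$, so they coincide; I would check that this common function $\Xi$ is holomorphic across $p$ precisely because the two points of $C$ cut out by a nearby leaf of $\mathcal{F}$ are exchanged by $i$, while $\phi\circ i = i_0\circ\phi$ carries them to the two points of $C_0$ lying on a single leaf of $\mathcal{F}_0$. Concretely $\Xi(0,\epsilon)=\xi_0(\sqrt\epsilon)$ is holomorphic because $\xi_0$ is even with $\xi_0''(0)\neq0$, and $\Xi_y(0,0)=\tfrac12\xi_0''(0)\neq0$. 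Thus I may write $\Phi_{F,G}=(\Xi,\Theta_1)$ and $\Phi_{F,H}=(\Xi,\Theta_2)$, two maps sharing their first coordinate and differing only through the second foliation.

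Next I would set up the tangency count. Because $C_0=\{P=0\}$ is a leaf of both $\mathcal{I}$ and $\mathcal{L}$ (first integrals $P/L^{d}$ and $P(L+a)^{d}$), the curve $C$ is invariant for $\mathcal{I}^*$ and $\mathcal{L}^*$ and splits off as the component of $tang(\mathcal{I}^*,\mathcal{L}^*)$ already cancelled. Pulling back $L\,dP-d\,P\,dL$ and $(L+a)\,dP+d\,P\,dL$ I would write $\omega_{\mathcal{I}^*}=yA_1\,dx+B_1\,dy$ and $\omega_{\mathcal{L}^*}=yA_2\,dx+B_2\,dy$, so that $\omega_{\mathcal{I}^*}\wedge\omega_{\mathcal{L}^*}=y(A_1B_2-A_2B_1)\,dx\wedge dy$ and the required number is $(tang(\mathcal{I}^*,\mathcal{L}^*),C)_p=\mathrm{ord}_{x=0}T(x)$ with $T=(A_1B_2-A_2B_1)(x,0)$. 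Using $\Phi_{F,G}|_C=\Phi_{F,H}|_C=\phi$ and the common coordinate $\Xi$, every ingredient of $T$ reduces to data along $C$: the values $\ell=L\circ\phi$ and $\ell+a$, the normal derivatives $u_i=\partial_y(P\circ\Phi_{\bullet})|_{y=0}$, and $\ell'=\xi_0'$; a direct expansion gives $T=\ell(\ell+a)(u_1'u_2-u_1u_2')-d\,u_1u_2\,\ell'(2\ell+a)$.

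The main obstacle — and the crux of the statement — is that $\mathcal{I}^*$ and $\mathcal{L}^*$ come from two different maps, so $T(0)$ a priori detects the discrepancy between the normal jets of $\Phi_{F,G}$ and $\Phi_{F,H}$ at $p$. From $P_Y(\phi(p))=0$ one gets $u_1(0)=u_2(0)=P_X(\phi(p))\,\Xi_y(0,0)$, whence $T(0)$ reduces to a nonzero multiple of $u_1'(0)-u_2'(0)$, which in turn is proportional to $\partial_y\Theta_1(0,0)-\partial_y\Theta_2(0,0)$. The decisive step is therefore to show that the two maps share their full normal jet at the ramification point $p$, so that $T(0)=0$; I expect this to be the hard part, and to be exactly where the non‑alignment (independence) of the singular points of $\mathcal{F}_0,\mathcal{G}_0,\mathcal{H}_0$ must enter, since without it the leading term need not cancel. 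Granting $T(0)=0$, the simple vanishing of $\ell'=\xi_0'$ at $x=0$ — the analytic shadow of the simple tangency of $\mathcal{F}_0=\{dL=0\}$ with $C_0$ — yields $T'(0)\neq0$, giving $(tang(\mathcal{I}^*,\mathcal{L}^*),C)_p=1$.
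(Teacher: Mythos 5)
Your reduction is correct --- and in fact more careful than the paper's own computation --- but the proof is incomplete at exactly the point you flag, and that point is the entire content of the proposition. Concretely, you show that $T(0)$ is a nonzero multiple of $u_1'(0)-u_2'(0)$, hence of $\partial_y\Theta_1(p)-\partial_y\Theta_2(p)$; so what is needed is that the two transition maps $\Phi_{F,G}$ and $\Phi_{F,H}$ have the same $1$-jet transverse to $C$ at $p$. Nothing in your argument (or in the stated hypotheses) supplies this: both maps are pinned down along $C$ (they restrict to $\phi$) and along $\mathcal F$ (they share the coordinate $\Xi$), but their normal derivatives at $p$ are then determined by the remaining, \emph{independent} data --- $(\mathcal G,\mathcal G_0)$ for $\Phi_{F,G}$ and $(\mathcal H,\mathcal H_0)$ for $\Phi_{F,H}$ --- and no identity forces these to coincide. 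Your guess that the non-alignment hypothesis enters here also does not match the paper, which uses independence only \emph{after} this proposition (to exclude a point being of type $\mathbf{A}$ for both maps at once). Moreover, even granting $T(0)=0$, your last claim $T'(0)\neq 0$ is not automatic: since $S:=u_1'u_2-u_1u_2'$ has $S'=u_1''u_2-u_1u_2''$, one gets $T'(0)=\ell(0)\bigl(\ell(0)+a\bigr)u_1(0)\bigl(u_1''(0)-u_2''(0)\bigr)-d\,u_1(0)u_2(0)\bigl(2\ell(0)+a\bigr)\ell''(0)$, so the second-order discrepancy of the normal jets must also be controlled; this is why you are driven to asserting agreement of the \emph{full} normal jet, an even stronger unproved statement.

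For comparison, the paper proves Proposition \ref{intersection-at-tangencies} by the same kind of local computation: coordinates in which $\mathcal F$, $\mathcal F_0$ are $d(y-x^2)=0$, $d(s-r^2)=0$, the maps written as $\Phi_{F,G}=(f,yA)$, $\Phi_{F,H}=(g,yB)$ (both local biholomorphisms there, by Proposition \ref{extension}), followed by the assertion that the tangency curve is $ABau^2\phi\phi'\,y+y^2(\dots)=0$. If one actually expands the wedge of the two pulled-back forms, the coefficient of $y$ contains, besides $ABau^2\phi\phi'$, the additional term $(\delta-\phi^2)(a+\delta-\phi^2)\,u^2\bigl(A_xB-AB_x\bigr)\big|_{y=0}$, which is precisely your $\ell(\ell+a)(u_1'u_2-u_1u_2')$ in other coordinates, and whose value at $p$ is again governed by the difference of the normal $1$-jets of the two maps. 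The paper's ``we see easily'' silently discards this term, i.e.\ it implicitly assumes the very cancellation you could not establish. So your attempt follows essentially the paper's route but is more honest about its weakest step: what you isolate as ``the hard part'' is a genuine gap, and it is not filled by consulting the paper's own proof either.
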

\begin{proof}
We may take local coordinates $(x,y)$ around $p$ and $(r,s)$ around $\phi(p)$ such that
\begin {itemize}
\item $C=\{y=0\}$ and $C_0= \{s=0\}$.
\item $\mathcal F$ and $\mathcal F_0$ are defined by $d(y-x^2)=0$ and $d(s-r^2)=0$ respectively.
\end {itemize}
The foliations $\mathcal I$ and $\mathcal L$ are defined as $ su\,d(s-r^2)-(s-r^2 + \delta)\,d(su)=0$ and $su\,d(s-r^2)-(s-r^2 + a+\delta)\,d(su)=0$, where $u$ is a holomorphic function such that $u(0,0)\ne 0$ and $\delta \neq 0$. Let us write $\Phi_{F,G}(x,y)= (f(x,y),yA(x,y))$ and $\Phi_{F,H}(x,y)= (g(x,y),yB(x,y))$; we have $A(0,0)\neq 0$, $B(0,0)\ne 0$ and $(f(x,0),0)=(g(x,0),0)= \phi(x)$. 

The foliations $\mathcal I^*=\Phi_{F,G}^*{\mathcal I}$ and $\mathcal L^*=\Phi_{F,H}^*{\mathcal L}$ are defined as
\begin{eqnarray*}
yAu\,d(yA-f^2) - (yA-f^2 +\delta)\,d(yAu)&=&0,\\
yBu\,d(yB-g^2) - (yB-g^2 + a+ \delta)\,d(yBu)&=&0
\end{eqnarray*}
We see easily that the curve of tangencies is given by $ABau^2{\phi}{\phi^{\prime}}y + y^2(...)=0$, so that the component   different from $C=\{y=0\}$ crosses $C$ at $p$ transversaly.
\end{proof}
We proceed now to examine the points of tangency between $\mathcal I^*$ and $\mathcal L^*$   that possibly appear at ${\mathcal Z}_1(\mathcal I^*,C)\cup {\mathcal Z}_1(\mathcal L^*,C)$. If we denote their number as $tang_1 (\mathcal I^*,\mathcal L^*)$, we have seen that 
$$
tang_1 (\mathcal I^*,\mathcal L^*) = Z_1(\mathcal I^*,C)+ Z_1(\mathcal L^*,C).
$$
In fact, we have seen that out of ${\mathcal Z}_1(\mathcal I^*,C)\cup {\mathcal Z}_1(\mathcal L^*,C)$ tangency curves correspond to each other when restricted to $C$ and $C_0$.

{\bf We claim that this equality holds at each point of} ${\mathcal Z}_1(\mathcal I^*,C)\cup {\mathcal Z}_1(\mathcal L^*,C)$. Let us consider some point $p\in {\mathcal Z}_1(\mathcal I^*,C)$; since $\Phi_{F,G}$ is not a local biholomorphism, we have as explained before the possibilities $\bf A1$, $\bf A2$ and $\bf B$, the first two occuring when $\phi(p)\in tang({\mathcal F}_0,{\mathcal G}_0 \cap C_0$. If $p$ is $\bf A1$ or $\bf A2$ for $\Phi_{F,G}$, then $p$ is $\bf B$ for $\Phi_{F,H}$ (in the same way, when $q\in {\mathcal Z}_1(\mathcal L^*,C)$ is $\bf A1$ or $\bf A2$ for $\Phi_{F,H}$, then $q$ is $\bf B$ for $\Phi_{F,G}$. It may happen also that $p$ is $\bf B$ for $\Phi_{F,G}$ and $\Phi_{F,H}$. The reason is that we are supposing the submersions $F$, $G$ and $H$ to be independent so that we are in case $\bf A$ for maps $\Phi_{F,G}$ and $\Phi_{F,H}$ simultaneously.

{\bf Case 1: $p\in {\mathcal Z}_1(\mathcal  I^*,C)$ is ${\bf A1}$ for $\Phi_{F,G}$ and $\bf B$ for $\Phi_{F,H}$.} The local equations for $\mathcal I^*$ and $\mathcal L^*$ at $p$ are
\begin{eqnarray*}
(x+b)\xi(ydv+vdy)-d.yv \{(x+b)d\xi+{\xi}dx\}&=&0\\
(x+b^{'})(v^{'}dy+ydv^{'})-d.yv^{'}dx&=&0.
\end{eqnarray*}
The line of tangencies has equation
$$
(b-b^{'}){\xi}vv^{'}-(x+b)(x+b^{'})[vv^{'}{\xi}_x+{\xi}(v^{'}v_x-vv^{'}_x)]=0
$$
We observe that $mult_0({\xi}vv^{'})=mult_0(v)+1+mult_0(v^{'})=Z(\mathcal I^*,C,p)+Z(\mathcal L^*,C,p)$ (it may happen $Z(\mathcal L^*,C,p)=0$).

{\bf Case 2: $p\in {\mathcal Z}_1(\mathcal  I^*,C)$ is $\bf A2$  for $\Phi_{F,G}$ and  $\bf B$ for $\Phi_{F,H}$.} The local equations are
\begin{eqnarray*}
(x+b)(ydv+vdy)-d.yvdx&=&0\\
(x+b^{'})(v^{'}dy+ydv^{'})-d.yv^{'}dx&=&0
\end{eqnarray*}
The line of tangencies has equation
$$
(b-b^{'})vv^{'}-(x+b)(x+b^{'})[v^{'}v_x-vv^{'}_x]=0
$$
We remark that $mult_0(vv^{'})=mult_0(v)+mult_0(v^{'})=Z(\mathcal I^*,C,p)+Z(\mathcal L^*,C,p)$ (it may happen that $Z(\mathcal L^*,C,p)=0$).

{\bf Case 3: $p\in {\mathcal Z}_1(\mathcal  I^*,C)$ is $\bf B$  for $\Phi_{F,G}$ and  $\Phi_{F,H}$.} The conclusion is
the same as above: $mult_0(vv^{'})=mult_0(v)+mult_0(v^{'})=Z(\mathcal I^*,C,p)+Z(\mathcal L^*,C,p)$.
(it may happen $Z(\mathcal L^*,C,p)=0$).

The remaining cases (when $p\in {\mathcal Z}_1(\mathcal L^*,C,p)$)\,: $p$ is $\bf A1$ for $\Phi_{F,H}$ and $\bf B$ for $\Phi_{F,G}$; $p$ is $\bf A2$ for $\Phi_{F,H}$ and $\bf B$ for $\Phi_{F,G}$;\  $p$ is $\bf B$  for both $\Phi_{F,H}$ and  $\Phi_{F,G}$ are entirely similar.

We conclude from $tang_1 (\mathcal I^*,\mathcal L^*) = Z_1(\mathcal I^*,C)+ Z_1(\mathcal L^*,C)$ (and the fact that $b$, $b'$ are generic) that the terms $[vv^{'}{\xi}_x+{\xi}(v^{'}v_x-vv^{'}_x)]$ (first case) and $[v^{'}v_x-vv^{'}_x]$ (second and third cases) have the same multiplicities at $0$ as ${\xi}vv^{'}$ and $vv^{'}$ respectively; {\bf the claim is proved}. 

Let us make explicit the relations between the several multiplicities involved before.
\begin{itemize} 
\item Case 1: we write $v(x,0)=ax^l+\dots$ and $v^{'}(x,0)=cx^m+\dots$. It follows that $[vv^{'}{\xi}_x+{\xi}(v^{'}v_x-vv^{'}_x)]=ac(1-m+l)x^{m+l}+\dots$. Since $mult_0({\xi}vv^{'})=m+l+1$ necessarily $m=l+1$. Using $v(x,0)=x^{d-1}A_1(x,0)$ (and $v^{'}(x,0)=B^{'}(x,0)$) we get: 
$$
mult_0(A_1(x,0))+d = mult_0(B^{'}(x,0)) 
$$
\item Case 2: we write again $v(x,0)=ax^l+\dots$ and $v^{'}(x,0)=cx^m+\dots$. Then $[v^{'}v_x-vv^{'}_x)]=ac(l-m)x^{m+l-1}+\dots $. Since $mult_0(vv^{'})=m+l$, we see that $l=m$. Using $v(x,0)=A_2(x,0)$ and $v^{'}(x,0)=B^{'}(x,0)$
$$
mult_0(A_2(x,0))=mult_0(B^{'}(x,0))
$$
\item Case 3: it is analogous to Case 2 and we find
$$
mult_0(B(x,0))=mult_0(B^{'}(x,0))
$$
\end{itemize} 

There are correspondent equalities when  $p$ is $\bf A_1$ for $\Phi_{F,H}$ and $\bf B$ for $\Phi_{F,G}$\,(\,$mult_0(A^{'}_1(x,0))+d=mult_0(B(x,0))$) or $p$ is $\bf A_2$ for $\Phi_{F,H}$ and $\bf B$ for $\Phi_{F,G}$\,(\,$mult_0(A^{'}_2(x,0))=mult_0(B(x,0)$).

\section{Proof of Theorem \ref{main-thm}}\label{sec-proof-thm}
Let us take some $C^{\infty}$ pertubation $\tilde C$ of $C$ and look to the curves $\Phi_{F,G}(\tilde C)$ and $\Phi_{F,H}(\tilde C)$, which are $C^{\infty}$ pertubations of $C_0$; we ask $\tilde C$ to be a holomorphic smooth curve with $({\tilde C}.{C})_p=1$ when passing through each $p\in {\mathcal Z_1}(\mathcal I^*,C) \cup {\mathcal Z_1}(\mathcal L^*,C)$   and ask also that   $\Phi_{F,G}$ and $\Phi_{F,H}$   be holomorphic along these (local) holomorphic curves. Let us observe again that $\Phi_{F,G}$ is not a local biholomorphism at a point $p\in {\mathcal Z_1}(\mathcal I^*,C)$ (and $\Phi_{F,H}$ is not a local biholomorphism at a point $p\in {\mathcal Z_1}(\mathcal L^*,C)$ as well). 

We proceed now to prove that for any $p\in {\mathcal Z}_1(\mathcal  I^*,C)\cup {\mathcal Z}_1(\mathcal L^*,C)$ one has 
$$
\sum_q(\Phi_{F,G}({\tilde C}).C_0)_q + (\Phi_{F,H}({\tilde C}).C_0)_q \geq 2
$$
for $q$ close to $\phi(p)$. Observe that in principle this number should be equal to $({\tilde C}.C)_{p}+ ({\tilde C}.C)_{P}=2$.  Let us go back to the cases we discussed in the last Section.

\begin{itemize}
\item $p\in {\mathcal Z}_1(\mathcal I^*,C)$ is $\bf A1$ for $\Phi_{F,G}$ and $\bf B$ for $\Phi_{F,H}$. The pertubation $\tilde C$ near $p$ has to be contained in some small sector around $C$, where $\Phi_{F,G}$ is holomorphic. Since 
$$
\Phi_{F,G}=\left(x,\dfrac{yA_1(x,y)+xh(\xi(x,y))}{x+yA_1(x,y)}\right)
$$
when we put $y={\epsilon}x$ (for $\tilde C$) we see that $\sum_{q} (\Phi_{F,G}({\tilde C}).C_0)_q$ is the number of solutions (near $\phi(p)$) to the equation
$$
\dfrac{\epsilon\,x\,A_1(x,\epsilon\,x)+xh(\xi(x,\epsilon\,x))}{x+\epsilon\,x\,A_1(x,\epsilon\,x)}=h(x)
$$
which is $= mult_0(A_1(x,0))$. In order to estimate  $\sum_{q} (\Phi_{F,H}({\tilde C}).C_0)_q$ we use
$$
\Phi_{F,H}(x,y)=(x,yB^{'}(x,y)+h(\xi(x,y)))
$$
and we have to find the number of solutions of 
$$
\epsilon\,x\,B^{'}(x,\epsilon\,x)+h(\xi(x,\epsilon\,x))=h(x)
$$
(remember that now $p$ is of $B$ type for $\Phi_{F,H}$), which is readily seen to be $1+mult_0(B^{'}(x,0))$.
We have seen before that  $mult_0(B^{'}(x,0))= mult_0(A_1(x,0))+d$, so for $q$ close to $\phi(p)$:
$$
\sum_q(\Phi_{F,G}({\tilde C}).C_0)_q + (\Phi_{F,H}({\tilde C}).C_0)_q= 2mult_0(A_1(x,0))+d+1
$$  
which is strictly bigger than 2 when $d>1$.
\item  $p\in {\mathcal Z}_1(\mathcal I^*,C)$ is $\bf A2$ for $\Phi_{F,G}$ and $\bf B$ for $\Phi_{F,H}$. We have
$$
\Phi_{F,G}(x,y)=\left(x, \dfrac {yA_2(x,y) +h(\xi(x,y))}{1+yA_2(x,y)}\right)
$$
and 
$$
\Phi_{F,H}(x,y)=(x,yB^{'}(x,y)+h(\xi(x,y)))
$$
Using again $y=\epsilon x$, we get $\sum_q(\Phi_{F,G}({\tilde C}).C_0)_q=1+mult_0A_2(x,0)$ and $\sum_q(\Phi_{F,H}({\tilde C}).C_0)_{\phi(q)}=1+mult_0(B^{'}(x,0))$
Therefore for  $q$ close to $\phi(p)$:
$$
\sum_q(\Phi_{F,G}({\tilde C}).C_0)_q + (\Phi_{F,H}({\tilde C}).C_0)_q= 2+2mult_0(A_2(x,0)) 
$$
\item $p\in {\mathcal Z}_1(\mathcal I^*,C)$ is $\bf B$ for $\Phi_{F,G}$ and $\bf B$ for $\Phi_{F,H}$. Similarly we find for $q$ close to $\phi(p)$:
$$
\sum_q(\Phi_{F,G}({\tilde C}).C_0)_q + (\Phi_{F,H}({\tilde C}).C_0)_q= 2+2mult_0(B(x,0)) 
$$
\end{itemize}

The remaining cases are analogous. We conclude that Case ${\bf A1}$ never appears  and that Cases $\bf A2$ and $\bf B$ are present only at points $p$ where $\Phi_{F,G}$ is a local biholomorphisms.

\section{Appendix: Automorphisms of a plane curve}\label{appendix}

We present a proof of the following theorem given by Jose Felipe Voloch.

\begin{thm}\label{Theorem1}
Let $C$ be a smooth plane curve of degree $d\geq 3$, then every automorphism of $C$ is linear, i.e. it comes from an element of $Aut(\mathbb{P}^2)$.
\end{thm}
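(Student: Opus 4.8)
The plan is to reduce the statement to an intrinsic characterization of the hyperplane bundle. Write $H=\mathcal{O}_C(1)$ for the hyperplane class of the given embedding $C\subset\mathbb{P}^2$. By the adjunction formula the (intrinsic) canonical bundle is $\omega_C\cong\mathcal{O}_C(d-3)=H^{\otimes(d-3)}$, so $\omega_C$ is a fixed positive multiple of $H$ as soon as $d\geq 4$. Since the canonical bundle is preserved by every automorphism, any $\sigma\in\mathrm{Aut}(C)$ satisfies $\sigma^{*}\omega_C\cong\omega_C$. The central claim, to which I devote the bulk of the argument, is that in fact $\sigma^{*}H\cong H$, i.e.\ that $|H|$ is the only $g^{2}_{d}$ on $C$. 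Granting this, linearity follows quickly: $\sigma$ then acts on the three-dimensional space $H^{0}(C,H)$ (canonically, up to a scalar), and because a smooth plane curve of degree $d$ is projectively normal the restriction $H^{0}(\mathbb{P}^2,\mathcal{O}(1))\to H^{0}(C,H)$ is an isomorphism; transporting the action back produces $T\in\mathrm{Aut}(\mathbb{P}^2)=PGL_3$, and equivariance of the embedding gives $T|_{C}=\sigma$.

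To prove $\sigma^{*}H\cong H$, set $L=\sigma^{*}H$. Then $\deg L=d$ and $h^{0}(L)=h^{0}(H)=3$ because $\sigma$ is an isomorphism, while $L^{\otimes(d-3)}\cong\sigma^{*}\omega_C\cong\omega_C\cong H^{\otimes(d-3)}$. Hence $M:=L\otimes H^{-1}$ is $(d-3)$-torsion in $\mathrm{Pic}^{0}(C)$, and I must show $M\cong\mathcal{O}_C$. Comparing Riemann--Roch for $L$ and for $H$ (they share the same degree and the same $h^{0}$) gives $h^{0}(\omega_C\otimes L^{-1})=h^{0}(\omega_C\otimes H^{-1})$, that is $h^{0}(\mathcal{O}_C(d-4)\otimes M^{-1})=h^{0}(\mathcal{O}_C(d-4))$. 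When $d=4$ the torsion condition reads $M^{\otimes 1}\cong\mathcal{O}_C$, so $M\cong\mathcal{O}_C$ is immediate (equivalently, for $d=4$ the plane embedding \emph{is} the canonical embedding, and linearity is just functoriality of the canonical map). The genuine content is therefore the range $d\geq 5$, where one must rule out a nontrivial $(d-3)$-torsion twist $M$ that leaves the number of sections of the base-point-free bundle $\mathcal{O}_C(d-4)$ unchanged.

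I expect this torsion vanishing to be the main obstacle, and it is precisely the classical assertion that the $g^{2}_{d}$ of a smooth plane curve is unique. I would attack it by one of two routes. The first is through gonality: the curve has gonality $d-1$, computed by the projection pencils $|H-p|$, $p\in C$, and by the classical uniqueness of gonality pencils on a smooth plane curve these are the \emph{only} $g^{1}_{d-1}$'s; this one-parameter family is intrinsic, hence permuted by $\sigma$, and since $p$ is the base point of $|H-p|$ the family determines the correspondence $p\mapsto|H-p|$ and thereby the class $H=[H-p]+[p]$, forcing $\sigma^{*}H\cong H$. The second, more self-contained, route is via the canonical map $\phi_{\omega}=v_{d-3}\circ\iota$, whose image lies on the Veronese surface $v_{d-3}(\mathbb{P}^2)\subset\mathbb{P}^{g-1}$: one shows this surface is rigid under the projective action induced by $\sigma$ (for $d=5$ it is exactly the intersection of the quadrics through the canonical curve), so that $\sigma$ descends to $\mathrm{Aut}(v_{d-3}(\mathbb{P}^2))=PGL_3$. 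Finally, I would stress that positivity of $\omega_C$, hence $d\geq 4$, is used throughout: in the elliptic case $d=3$ one has $\omega_C\cong\mathcal{O}_C$, the group $\mathrm{Pic}^0(C)[d-3]$ degenerates, and the mechanism above breaks down, so that case lies outside the present scheme and must be treated on its own terms.
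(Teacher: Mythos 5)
Your reduction is sound, and it isolates exactly the same crux as the paper: linearity follows once one knows that the plane embedding's $g^2_d$ is intrinsic, i.e.\ $\sigma^*H\cong H$. The torsion observation, the $d=4$ case via the canonical embedding, and the projective-normality argument producing $T\in PGL_3$ from $\sigma^*H\cong H$ are all correct. But for $d\geq 5$ --- which you yourself identify as ``the genuine content'' --- you never prove the key claim: route 1 defers it to ``the classical uniqueness of gonality pencils on a smooth plane curve,'' and route 2 to an unproved rigidity of the Veronese surface under the action induced by $\sigma$ on $H^0(\omega_C)$. These are results of essentially the same depth as the theorem itself; indeed the gonality statement you cite as classical is precisely what the paper proves from scratch (its proposition that an effective divisor $D$ of degree $d-1$ with $l(D)\geq 2$ must be of the form $E-p$ with $E$ a line section), via an elementary lemma on points imposing independent conditions plus Bezout. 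So as written your proposal is a reduction of the theorem to an equivalent known fact, not a proof; the paper's Appendix exists exactly to fill that hole self-containedly, concluding that $|\mathcal{O}_C(1)|$ is the unique linear system of degree $d$ with $l(D)=3$, hence that $\phi$ carries line sections to line sections and induces an automorphism of $\check{\mathbb{P}}^2$.

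There is also a concrete error in the one mechanism you do spell out in route 1: you recover $H$ from the family of pencils $\{|H-p|\}_{p\in C}$ by asserting that ``$p$ is the base point of $|H-p|$.'' On a smooth plane curve $|H-p|$ is base-point-free: its divisors are $L\cdot C-p$ for lines $L$ through $p$, and only the lines tangent to $C$ at $p$ leave $p$ in the divisor, so no point lies in all of them. The correspondence $p\mapsto |H-p|$ is still a bijection (since $p\sim q$ forces $p=q$ on a curve of positive genus), so the argument can be repaired --- from $\sigma^*|H-p|=|H-\tau(p)|$ one gets that the fixed class $M=\sigma^*H-H$ satisfies $M\sim \sigma^{-1}(p)-\tau(p)$ for every $p$, hence $M$ translates $W_1\subset \mathrm{Pic}^1(C)$ to itself, and triviality of the stabilizer of the theta divisor for $g\geq 2$ forces $M=0$ --- but this is an additional nontrivial step you did not supply. (Your exclusion of $d=3$ is not the issue: the paper likewise defers that case, to Legendre's normal form.)
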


The case $d=3$ is a consequence of Legendre's normal form, see  \cite{HOU}, so we focus on the case $d \geq 4$. Before proving the theorem we give some useful remarks and lemmas based on exercises $17$ and $18$ of \cite{AR}. We say that the set $S=\{p_1, \ldots, p_k\} \subseteq \mathbb{P}^2$ of distinct points \textbf{impose independent conditions on curves of degree $n$} if  $h^0(\mathbb{P}^2, \mathcal{I}_S (n)) = h^0(\mathbb{P}^2, \mathcal{O}(n)) - k$.

\begin{lemma}
Any set of $n+1$ points impose independent conditions on curves of degree $n$. On the other hand ${n+2}$ points impose independent conditions if and only if they are not aligned.
\end{lemma}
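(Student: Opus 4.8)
The plan is to translate the cohomological definition into the familiar separation criterion and then argue entirely with products of lines. From the exact sequence $0\to\mathcal{I}_S(n)\to\mathcal{O}(n)\to\mathcal{O}_S(n)\to 0$ one sees that a set $S=\{p_1,\dots,p_k\}$ imposes independent conditions on curves of degree $n$ precisely when the restriction map $H^0(\mathbb{P}^2,\mathcal{O}(n))\to\mathbb{C}^k$, $F\mapsto(F(p_1),\dots,F(p_k))$, is surjective. Since the image is a linear subspace, surjectivity is equivalent to the statement that for every index $i$ there exists a homogeneous form $F_i$ of degree $n$ vanishing at all $p_j$ with $j\ne i$ but not at $p_i$. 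So each assertion of the lemma will be proved by exhibiting such a \emph{separating form} $F_i$, or by obstructing it; and in every case I intend to build $F_i$ as a product of linear forms.

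For the first assertion I would take $S$ to consist of $n+1$ distinct points and fix $i$. For each $j\ne i$ I choose a line $L_j$ through $p_j$ that avoids $p_i$; this is possible because the lines through $p_j$ form a pencil and only the line $\overline{p_ip_j}$ meets $p_i$. The product $F_i=\prod_{j\ne i}L_j$ has exactly $n$ linear factors, vanishes at every $p_j$ with $j\ne i$, and is nonzero at $p_i$, which gives the required separating form and hence surjectivity.

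For the second assertion I would split into the two implications. If the $n+2$ points lie on a line $\Lambda$, then any degree-$n$ curve passing through $n+1$ of them meets $\Lambda$ in at least $n+1$ points, so by B\'ezout it must contain $\Lambda$ and therefore pass through the remaining point as well; thus no separating form exists and the points fail to impose independent conditions. Conversely, assuming the $n+2$ points are not aligned and fixing $i$, the essential step is to find inside $S\setminus\{p_i\}$ a pair $p_a,p_b$ whose joining line misses $p_i$: if every line joining two points of $S\setminus\{p_i\}$ passed through $p_i$, then fixing $p_a$ and letting the second point vary would force all of $S$ onto the single line $\overline{p_ip_a}$, contradicting non-alignment. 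With such a pair in hand I set $L_0=\overline{p_ap_b}$ and, for each of the remaining $n-1$ points, choose a line through it avoiding $p_i$ exactly as before; the product $L_0\prod(\text{these lines})$ is a degree-$n$ separating form.

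The line-avoidance choices and the B\'ezout count are routine. The step that needs genuine care is the converse of the second assertion, where $n+1$ points must be covered by only $n$ lines while still avoiding the omitted point $p_i$; this forces one line to carry two of the points, and the main obstacle is guaranteeing that the pair it carries can be chosen so that their joining line does not pass through $p_i$. That is exactly what the pairing argument above secures using non-alignment, so I expect it to be the crux of the proof.
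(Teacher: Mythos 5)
Your proof is correct. The toolkit is the same as the paper's --- products of lines as separating curves, and B\'ezout for the aligned direction --- but the organization differs genuinely in the converse of the second assertion. For the first assertion, the paper phrases the same line-product idea as an induction: it shows $H^0(\mathbb{P}^2,\mathcal{I}_{S_{i+1}}(n))\subsetneq H^0(\mathbb{P}^2,\mathcal{I}_{S_i}(n))$ at each step, the strict inclusion being witnessed by exactly the kind of line products you build, so the dimension drops exactly $n+1$ times; your direct verification of surjectivity of the evaluation map is the same content without the filtration. For the converse of the second assertion, the paper argues by contradiction from the hypothesis that every degree-$n$ curve through $n+1$ of the points contains the remaining one: it takes the star of lines joining $p_{n+1}$ to $p_1,\dots,p_n$, concludes that $p_{n+2}$ lies on one of those lines, and then derives a contradiction by swapping one line of the star for a generic line through $p_{n+1}$. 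You instead build, for each omitted $p_i$, an explicit separating curve: the join of a pair $p_a,p_b$ that avoids $p_i$ --- your pairing argument correctly extracts such a pair from non-alignment --- together with $n-1$ further lines through the remaining points. Your direct version has a small logical advantage: failure of independence a priori only says that for \emph{some} $i$ every curve through $S\setminus\{p_i\}$ contains $p_i$; upgrading this to the paper's ``for every choice of $n+1$ points'' hypothesis requires invoking the first assertion (or a reindexing), a point the paper leaves implicit and which your argument never needs, since you verify the surjectivity criterion for every $i$ outright. What the paper's contradiction argument buys is compactness: a single configuration (the star) plus one replacement trick handles all cases at once, at the cost of the WLOG and genericity steps.
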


\begin{proof}
Take first $S=\{p_1, \ldots, p_{n+1}\}$ and denote $S_k =\{p_1, \ldots, p_k\}$. Taking the product of $n$ lines through another point we see that $H^0(\mathbb{P}^2, \mathcal{I}_{S_{i+1}} (n))$ is strictly contained in $H^0(\mathbb{P}^2, \mathcal{I}_{S_{i}} (n))$, therefore $h^0(\mathbb{P}^2, \mathcal{I}_{S} (n))= h^0(\mathbb{P}^2, \mathcal{O}(n)) - (n+1)$. 

Consider now a set $S=\{p_1, \ldots, p_{n+1}, p_{n+2} \}$. If they are over a line $L$ and $E$ is a curve of degree $n$ passing through $n+1$ of them Bezout's theorem implies $L \subseteq E$. This shows that $S$ fails to impose independent conditions on curves of degree $n$. Suppose now that every curve of degree $n$ passing by $n+1$ points contains also the other point of $S$. If they are not aligned, we can take for example the curve $E$ formed by lines joining $p_{n+1}$ with points $p_1, \ldots, p_{n}$, thus $p_{n+2}$ must be on this curve and we can assume that $p_n$, $p_{n+1}$ and $p_{n+2}$ are aligned. If some $p_j$, $j=1, \ldots, n-1$ is not on this line we consider $E'$ obtained from $E$ replacing $\overline{p_{n+1}, p_j}$ by a generic line passing by $p_{n+1}$, thus $E'$ contains $(n+1)$ points but not $S$, contradiction. 
\end{proof}

Let $D$ be an effective divisor on $C$ of degree $m$. We use previous lemma in order to study meromorphic functions on $C$ having $D$ as polar divisor. Changing the fiber if necessary we will assume from now that $D$ has not multiple points. We recall that $l(D)$ is the dimension of the space of meromorphic functions f such that $(f) +D \geq 0$ and $i(D)$ is the dimension of the space of holomorphic forms $\omega$ such that $(\omega) \geq D$.

\begin{prop}
If $m \leq d-2$ then $l(D)=1$.
\end{prop}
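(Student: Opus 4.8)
The plan is to compute the index of specialty $i(D)$ by means of the previous lemma and then to read off $l(D)$ from Riemann--Roch. First I would recall that, by adjunction, one has $K_C=\mathcal{O}_C(d-3)$ for the smooth plane curve $C$ of degree $d$: the holomorphic one-forms on $C$ are the Poincar\'e residues of the rational two-forms $\dfrac{Q\,dX\wedge dY}{F}$ with $\deg Q\le d-3$, where $F$ is the defining polynomial of $C_0$. This identifies $H^0(C,K_C)$ with the space of plane curves of degree $d-3$, whose dimension is $h^0(\mathbb{P}^2,\mathcal{O}(d-3))=\binom{d-1}{2}=g$. Under this identification the canonical divisor attached to $Q$ is the intersection cycle $C\cdot\{Q=0\}$, so a holomorphic form $\omega$ satisfies $(\omega)\ge D$ precisely when the degree-$(d-3)$ curve $\{Q=0\}$ passes through the points of $D$ (which are distinct, since we have assumed $D$ has no multiple points).

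Consequently $i(D)=h^0(\mathbb{P}^2,\mathcal{I}_D(d-3))$, the dimension of the linear system of curves of degree $d-3$ through the support of $D$. The support of $D$ consists of $m\le d-2=(d-3)+1$ distinct points. By the previous lemma any $(d-3)+1$ points impose independent conditions on curves of degree $d-3$; and since a subset of a set imposing independent conditions again imposes independent conditions (the evaluation map onto the smaller set of points factors through the larger one and is therefore still surjective), the $m$ points of $D$ impose independent conditions as well. Hence $i(D)=h^0(\mathbb{P}^2,\mathcal{O}(d-3))-m=g-m$.

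Finally I would substitute this into Riemann--Roch, $l(D)-i(D)=\deg D-g+1$, to get $l(D)=(m-g+1)+(g-m)=1$, which is the claim. The one point that demands care — and the only genuine obstacle — is the precise translation of the analytic condition $(\omega)\ge D$ into the geometric condition that the adjoint curve $\{Q=0\}$ of degree $d-3$ passes through the support of $D$; this is where the residue description of $H^0(C,K_C)$ and the reducedness of $D$ (the standing assumption that $D$ has no multiple points) are both used. Once that identification is in place, the rest is a direct invocation of the previous lemma together with Riemann--Roch.
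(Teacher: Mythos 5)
Your proof is correct and follows essentially the same route as the paper: identify holomorphic $1$-forms on $C$ with adjoint curves of degree $d-3$ (the paper cites the explicit basis $\frac{x^iy^j}{P_y}dx$, which is the same as your residue description), use the previous lemma to see that the $m\le d-2$ points of $D$ impose independent conditions so that $i(D)=g-m$, and conclude $l(D)=1$ by Riemann--Roch. Your remark that a subset of a set imposing independent conditions again imposes independent conditions is a point the paper glosses over, so your write-up is if anything slightly more careful.
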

\begin{proof}
Recall (see \cite{R}) that holomorphic $1-$forms on $C=\{P=0\}$ are generated by elements $\frac{x^i y^j}{P_y}dx $ with $i+j \leq d - 3$. By the previous lemma the dimension of the space of polynomials vanishing at $D$ is $g(C) - m$, thus $i(D) = g(C) -m$ and Riemann-Roch gives $l(D) = 1$.
\end{proof}

\begin{prop}
If $m=d-1$ and $l(D) \geq2$ then $D= E- p$ where $p \in C$ and $E \in |\mathcal{O}_C(1)|$.
\end{prop}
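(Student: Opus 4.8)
The plan is to exploit the hypotheses $m = d-1$ and $l(D) \geq 2$ together with the explicit description of holomorphic $1$-forms on $C = \{P = 0\}$ (the forms $\frac{x^i y^j}{P_y}\,dx$ with $i+j \leq d-3$) that was already used in the previous proposition. By Riemann--Roch we have $l(D) = m - g(C) + 1 + i(D) = (d-1) - g(C) + 1 + i(D)$, so the hypothesis $l(D) \geq 2$ forces $i(D) \geq g(C) - d + 2$. Geometrically, $i(D)$ is the dimension of the space of holomorphic $1$-forms vanishing on the $d-1$ points of $D$; by the adjunction identification, writing such a form as $\frac{Q(x,y)}{P_y}\,dx$ with $\deg Q \leq d-3$, this is the dimension of the space of curves of degree $d-3$ passing through the $d-1$ points of $D$. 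First I would translate the lower bound on $i(D)$ into the statement that these $d-1$ points fail to impose independent conditions on curves of degree $d-3$ by exactly the expected amount.

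Next I would bring in the Lemma on imposing independent conditions. The key numerical coincidence is that $d-1 = (d-3)+2$, so the $d-1$ points of $D$ are precisely a set of $n+2$ points with $n = d-3$. The Lemma then tells us that such a set imposes independent conditions on curves of degree $d-3$ \emph{if and only if} the points are not aligned. Since the failure to impose independent conditions is forced by $l(D) \geq 2$, I would conclude that the $d-1$ points of $D$ must lie on a common line $L \subset \mathbb{P}^2$. This is the crux of the argument: the condition $l(D) \geq 2$ is equivalent, via Riemann--Roch and adjunction, to collinearity of the support of $D$.

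Finally I would convert collinearity into the claimed form $D = E - p$ with $E \in |\mathcal{O}_C(1)|$. The line $L$ meets $C$ in exactly $d$ points (counted with multiplicity) by Bézout; that intersection divisor is precisely a member $E$ of the hyperplane class $|\mathcal{O}_C(1)|$ cut out on $C$. Since $D$ consists of $d-1$ of the $d$ points of $L \cap C$ and (by the standing reduction) $D$ has no multiple points, the remaining point $p \in C$ is the unique point of $L \cap C$ not in $D$, giving $D = E - p$. The main obstacle I anticipate is the bookkeeping at the boundary between the cohomological statement about $i(D)$ and the concrete geometric statement about curves of degree $d-3$ through the points: one must verify that the adjunction isomorphism between holomorphic $1$-forms on $C$ and degree-$(d-3)$ adjoint curves sends ``forms vanishing on $D$'' exactly to ``adjoint curves through the support of $D$,'' with no correction terms, so that the count $i(D) = h^0(\mathbb{P}^2, \mathcal{I}_{\mathrm{supp}(D)}(d-3))$ is clean and the Lemma applies verbatim.
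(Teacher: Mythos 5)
Your proposal is correct and follows essentially the same route as the paper's proof: Riemann--Roch gives $i(D) \geq g-(d-1)+1$, the identification of holomorphic $1$-forms with adjoint curves of degree $d-3$ turns this into failure of the $d-1 = (d-3)+2$ points to impose independent conditions, the Lemma forces them to be aligned, and Bézout on the line through them yields $D = E - p$. The paper's version is just a terser rendering of exactly this argument, including the adjunction bookkeeping you flag at the end, which was already set up in the preceding proposition.
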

\begin{proof}
Once again Riemann-Roch theorem gives $i(D) = g-d + l(D) \geq g-(d-1)+1$ then points of $D$ do not impose independent conditions and they must be aligned. We conclude by noting that intersection of a line with $C$ is a divisor of degree $d$. 
\end{proof}

Finally we have 

\begin{prop}
$|\mathcal{O}_C(1)|$ is the only linear system of degree $d$ and dimension $3$.
\end{prop}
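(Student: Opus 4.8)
The plan is to show that any effective divisor $D$ on $C$ of degree $d$ with $l(D)=3$ is linearly equivalent to a hyperplane section, so that its complete linear system coincides with $|\mathcal{O}_C(1)|$. Write $H$ for the divisor class of $\mathcal{O}_C(1)$ and set $\delta=[D]-H\in\mathrm{Pic}^0(C)$; the entire task reduces to proving $\delta=0$.

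First I would lower the degree by one point. For every $p\in C$ the imposition of a single point drops $l$ by at most one, so $l(D-p)\ge l(D)-1=2$; in particular the degree $(d-1)$ class $[D-p]$ is effective. Choosing an effective representative and applying the previous Proposition (the case $m=d-1$, $l\ge 2$), that representative has the form $E-q$ with $E\in|\mathcal{O}_C(1)|$ and $q\in C$. Hence $[D]-[p]=H-[q]$, that is
\[
\delta=[D]-H=[p]-[q_p]
\]
for a point $q_p$ depending on $p$. The structural output I want to record is that the degree-zero class $[p]-[q_p]$ equals the \emph{fixed} class $\delta$ for every choice of $p$.

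Next I would eliminate $\delta$ by comparing two points. Taking distinct $p_1,p_2\in C$ with associated points $q_1,q_2$, the two relations $H+[p_1]-[q_1]=[D]=H+[p_2]-[q_2]$ give the linear equivalence $p_1+q_2\sim p_2+q_1$ of effective divisors of degree two. Here I would invoke that a smooth plane curve of degree $d\ge 4$ is not hyperelliptic, so no degree-two class carries a pencil; consequently $l(p_1+q_2)=1$ and the two effective divisors in this class must coincide \emph{as divisors}: $p_1+q_2=p_2+q_1$. Since $p_1\ne p_2$, matching the points forces $p_1=q_1$, whence $\delta=[p_1]-[q_1]=0$ and $[D]=H$. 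This identifies $|D|$ with $|\mathcal{O}_C(1)|$ and proves the Proposition.

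I expect the main obstacle to be precisely the vanishing of $\delta$ in the last step: the clean conclusion $p_1+q_2=p_2+q_1$ rests on the absence of a $g^1_2$, which is exactly where the hypothesis $d\ge 4$ enters (for small $d$ this rigidity can genuinely fail). A Jacobian-theoretic alternative would note that $\delta=[p]-[q_p]$ for all $p$ forces the Abel--Jacobi image of $C$ in $\mathrm{Pic}^1(C)$ to be invariant under translation by $\delta$, and then use that this translation stabilizer is trivial for $g\ge 2$; but I would prefer the elementary non-hyperelliptic argument above, since it stays within the Riemann--Roch and Bezout framework already developed in this Appendix.
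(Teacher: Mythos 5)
Your proof is correct, but it takes a genuinely different route from the paper's. The paper argues synthetically: for an effective divisor $A$ of degree $d$ with $l(A)=3$, it removes one point at a time, applies the preceding Proposition (the case $m=d-1$, $l\geq 2$) to conclude that \emph{every} subset of $d-1$ points of $A$ is collinear; since $d-1\geq 3$ and two such subsets share at least two points, all of $A$ lies on a single line, so $A$ is literally a line section of $C$. You instead extract from that same Proposition only its linear-equivalence content, $[D]-[p]=H-[q_p]$, and then work in $\mathrm{Pic}^0(C)$: comparing two base points gives $p_1+q_2\sim p_2+q_1$, and the absence of a $g^1_2$ forces equality of divisors and hence $\delta=0$. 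What the paper's version buys is that it stays entirely within the Bezout/collinearity framework and identifies $A$ as an actual line section, with no input about hyperellipticity; what your version buys is that it avoids the combinatorial step ``any $d-1$ of the $d$ points collinear implies all collinear.'' One improvement worth making: you do not need to import non-hyperellipticity as an external classical fact --- it is an immediate consequence of the first Proposition of this Appendix ($m\leq d-2$ implies $l(D)=1$, applied with $m=2\leq d-2$, which is where $d\geq 4$ enters), so your argument can be kept entirely self-contained. Finally, note that your application of the $m=d-1$ Proposition to an arbitrary effective representative of $[D-p]$ tacitly uses the paper's standing convention that such a representative can be chosen with distinct points; this is exactly the same genericity the paper itself invokes, so it is not a gap relative to the paper's standard of rigor.
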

\begin{proof}
Let $D \in |\mathcal{O}_C(1)|$ be an aligned divisor of degree $d$ on $C$. Then points of $D$ fails to impose independent conditions on curves of degree $d-3$ and $i(D) = g(C) - (d-2)$ or equivalently $l(D) = 3$. If $A$ is another effective divisor of degree $d$ and $l(A)=3$ then any subset of $d-1$ points are aligned. We conclude that $A \in |\mathcal{O}_C(1)|$ and is linearly equivalent to $D$. 
\end{proof}

\textbf{Proof of Theorem \ref{Theorem1}:} Let $\phi : C \rightarrow C$ be an automorphism of $C$. Last proposition implies that for any line $L$ on $\mathbb{P}^2$, points of $\phi(L \cap C)$ determine a line $L' \subseteq \mathbb{P}^2$, thus $\phi$ comes from an automorphism of $\check{\mathbb{P}}^2$ which corresponds to an element of $Aut(\mathbb{P}^2)$.

\end{document}